\address{\noindent
Pierre Youssef,\newline
Universit\'{e} Paris-Est\newline
Laboratoire d'Analyse et Math\'{e}matiques Appliqu\'ees (UMR 8050 CNRS)\newline
5, boulevard Descartes,
Champs sur Marne, \newline
77454 Marne-la-Vall\'{e}e, Cedex 2, France \newline
\texttt{e-mail: \small pierre.youssef@univ-mlv.fr}
}
\date{}
\newtheorem*{theoremeB}{Theorem B}
\newtheorem*{theoremeA}{Theorem A}
\newtheorem{theo}{Theorem}[section]
\newtheorem{prop}[theo]{Proposition}
\newtheorem{defi}[theo]{Definition}
\newtheorem{coro}[theo]{Corollary}
\newtheorem{lem}[theo]{Lemma}
\newtheorem{Rq}[theo]{Remark}
\newtheorem*{theoremeC}{Theorem C}
\title[]{A note on column subset selection}
\author{Pierre Youssef}
\begin{document}

\maketitle

\begin{abstract}
Given a matrix $U$, using a deterministic method, we 
extract a "large" submatrix of $\widetilde{U}$ (whose columns are obtained by 
normalizing those of $U$) 
and control its smallest and 
largest singular value. We apply this result to the study of contact points 
of the unit ball of a finite normed space with its maximal volume ellipsoid. 
We consider also the paving problem 
and give a deterministic algorithm to partition a matrix into almost isometric blocks 
recovering previous results of Bourgain-Tzafriri and Tropp. 
Finally, we partially answer a question raised by Naor 
about finding an algorithm in the spirit of Batson-Spielman-Srivastava's 
work to extract a "large" square submatrix of "small" norm.
\end{abstract}

%%%%%%%%%%%%%%%%%%%%%%%%%%%%%%%%%%%%%%%%%%%%%%%%%%%%%%%%%%%%%%%%%%%%%%%%%%%%%%%%%%%
%%%%%%%%%%%%%%%%%%%%%%%%%%%%%%%%%%%%%%%%%%%%%%%%%%%%%%%%%%%%%%%%%%%%%%%%%%%%%%%%%%%
\section{Introduction}
%%%%%%%%%%%%%%%%%%%%%%%%%%%%%%%%%%%%%%%%%%%%%%%%%%%%%%%%%%%%%%%%%%%%%%%%%%%%%%%%%%%
%%%%%%%%%%%%%%%%%%%%%%%%%%%%%%%%%%%%%%%%%%%%%%%%%%%%%%%%%%%%%%%%%%%%%%%%%%%%%%%%%%%

Let $U$ a $n\times m$ matrix, the stable rank of $U$ is given by
$
srank(U)=\frac{\Vert U\Vert_{\rm HS}^2}{\Vert U\Vert^2},
$
where $\Vert U\Vert_{\rm HS}^2={\rm Tr}(UU^t)$ denotes the Hilbert-Schmidt 
norm of $U$ and $\Vert U\Vert^2$ the square of the operator norm of $U$ seen as an 
operator from $l_2^m$ to $l_2^n$. 

Given $\sigma\subset \{1,...,m\}$, we denote $U_{\sigma}$ the restriction 
of $U$ to the columns with indices in $\sigma$ i.e. $U_{\sigma}=UP_{\sigma}^t$ 
where $P_{\sigma}: \mathbb{R}^m\longrightarrow \mathbb{R}^{\sigma}$ is the canonical 
coordinate projection. 

Denoting $\widetilde{U}$ the matrix 
whose columns are obtained by 
normalizing those of $U$, our aim is to extract 
almost  $srank(U)$ number of linearly independent columns of $\widetilde{U}$  
and estimate the smallest and the largest singular value of the restricted matrix. 

This problem is closely related to the restricted invertibility where only an estimate 
on the smallest singular value is needed. The restricted invertibility was first studied 
by Bourgain-Tzafriri \cite{MR890420} who proved the following:
\begin{theoremeA}
Given an $n\times n$ matrix $T$ whose columns are of norm one, there exists
$\sigma \subset \{1,\ldots ,n\}$ with $\vert \sigma\vert \geqslant d
\frac{n}{\Vert T \Vert_2^2}$ such that $ \Vert T_\sigma x\Vert_2
\geqslant c\Vert x\Vert_2$ for all $x\in \mathbb{R}^\sigma$, where
$d, c>0$ are absolute constants and $T_\sigma$ denotes the restriction 
of $T$ onto the columns in $\sigma$.
\end{theoremeA}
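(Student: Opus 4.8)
The plan is to prove Theorem~A by an explicit, iterative selection of columns driven by a ``barrier'' potential, in the spirit of the Batson--Spielman--Srivastava method; this yields the set $\sigma$ deterministically and with no logarithmic losses. (A softer probabilistic route also works: let $\sigma$ contain each index independently with probability $\delta\asymp 1/\Vert T\Vert_2^2$ and control $\Vert T_\sigma^t T_\sigma-\delta I_\sigma\Vert_2$ through a random--submatrix estimate; but then shaving off an extra $\log$ factor costs an additional iteration, so we prefer the argument below.) Write $v_1,\dots,v_n\in\mathbb R^n$ for the columns of $T$, so $\Vert v_i\Vert_2=1$, and put $M=\Vert T\Vert_2^2$, $k=\lfloor cn/M\rfloor$. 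The requirement ``$\Vert T_\sigma x\Vert_2\ge c\Vert x\Vert_2$ for all $x\in\mathbb R^\sigma$'' is equivalent to $T_\sigma^t T_\sigma\succeq c^2 I_\sigma$, i.e.\ to the Gram matrix $G_\sigma=(\langle v_i,v_j\rangle)_{i,j\in\sigma}$ satisfying $G_\sigma\succeq c^2 I$, so it is enough to extract $k$ columns whose Gram matrix is uniformly invertible from below. Since $A:=TT^t=\sum_{i=1}^n v_iv_i^t$ has $\mathrm{Tr}\,A=n$ and $\Vert A\Vert_2=M$, the ``stable rank'' $n/M$ is exactly the budget of columns one can hope to select.

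First I would fix the iterative scheme. Build an increasing chain $\emptyset=\sigma_0\subset\sigma_1\subset\cdots\subset\sigma_k$ together with a strictly increasing sequence of lower barriers $\ell_0<\ell_1<\cdots<\ell_k$ with $\ell_{j+1}=\ell_j+\delta$ for a fixed increment $\delta\asymp M/n$. With $A_j=\sum_{i\in\sigma_j}v_iv_i^t$ and $P_j$ the orthogonal projection onto the (then $j$-dimensional) span of $\{v_i:i\in\sigma_j\}$, track the potential $\Phi_j=\mathrm{Tr}\big[(A_j-\ell_j P_j)^{+}\big]$, where ${}^{+}$ is the Moore--Penrose pseudoinverse; $\Phi_j<\infty$ holds precisely when the $j$-th largest eigenvalue of $A_j$ stays strictly above $\ell_j$. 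The invariant to carry along the chain is $\Phi_j\le\Phi_0$.

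The heart of the proof is a one-step lemma: given $\sigma_j$ with $\Phi_j\le\Phi_0$, there is $i\notin\sigma_j$ such that passing from $(A_j,\ell_j,P_j)$ to $(A_j+v_iv_i^t,\ell_j+\delta,P_{j+1})$ does not increase $\Phi$ (in particular $v_i$ is then independent of the previously chosen columns). By the Sherman--Morrison rank-one update formula the variation of $\Phi$ decomposes into a ``bad'' term proportional to $\langle v_i,(A_j-\ell_{j+1}P_j)^{-2}v_i\rangle$ and a negative ``good'' term proportional to $\langle v_i,(A_j-\ell_{j+1}P_j)^{-1}v_i\rangle$, the effect of enlarging the projection being treated separately. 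Averaging both over $i\in[1,n]\setminus\sigma_j$ and using $\sum_i v_iv_i^t=A$, $\mathrm{Tr}\,A=n$, $\Vert A\Vert_2=M$ shows the averaged good term beats the averaged bad term as long as $j\le cn/M$; a pigeonhole in $i$ then produces an admissible index, so the chain reaches length $k$. At the end, $\sigma:=\sigma_k$ has $|\sigma|=k\ge d\,n/\Vert T\Vert_2^2$ and $\lambda_{\min}(G_\sigma)\ge\ell_k\ge k\delta\ge c^2$ for absolute constants $d,c>0$, which is Theorem~A.

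The main obstacle is the calibration inside the one-step lemma: one must choose $\ell_0$ and $\delta$ so that the averaged bad term stays below the averaged good term for \emph{every} $j\le cn/M$, which is what pins down $\delta\asymp M/n$ (hence the extracted size $\asymp n/M$) and simultaneously forces a proof that the potential never explodes along the whole chain, i.e.\ that the $k$-th eigenvalue never collides with the barrier. A second, more technical point: $A=TT^t$ need not be invertible and the chosen columns span only a low-dimensional subspace, so the barrier and the inverse must be taken inside the range of the currently selected columns rather than on all of $\mathbb R^n$; tracking this moving subspace --- rather than pre-projecting, which would destroy the normalization $\Vert v_i\Vert_2=1$ --- is what makes the bookkeeping above non-trivial.
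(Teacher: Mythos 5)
Your overall architecture --- iterative rank-one column selection driven by a lower spectral barrier and a trace-of-resolvent potential, with an averaging/pigeonhole step over the remaining columns --- is exactly the mechanism behind the paper's argument (Theorem~\ref{big block-colomn selection-smallest and largest singular value} applied with $U=T$, for which $\widetilde T=T$, gives Theorem~A with, say, $\varepsilon=1/2$, $d=1/4$, $c=1/3$). However, as written your scheme has two genuine gaps. First, the invariant $\Phi_j\le\Phi_0$ cannot anchor the induction: with $A_0=0$ and $P_0=0$ you have $\Phi_0=0$, while already $\Phi_1=\frac{1}{1-\ell_1}>0$, so the invariant fails at the very first step; moreover, because you restrict the resolvent to the range of $A_j$ through $P_j$ and the pseudoinverse, your potential carries no information at all about $\ker A_j$, which is precisely the information the argument must track. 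Second, the decisive point --- showing that among the unchosen columns there is one whose component on $\ker A_j$ is large enough that the newly created eigenvalue clears the raised barrier $\ell_{j+1}$, with a new potential term that can be absorbed --- is what you defer to ``bookkeeping treated separately'', and nothing in the averaging identity $\sum_i\langle v_i,Xv_i\rangle=\mathrm{Tr}(XA)$ controls it, since your $X$ lives on the range of $A_j$. (A further obligation created by your increasing barrier: the $j$ previously placed eigenvalues are only known to exceed $\ell_j$, so you must also argue they exceed $\ell_{j+1}=\ell_j+\delta$, which is not automatic and which a decreasing-barrier formulation avoids.)

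The paper resolves exactly these issues by a different calibration of the same idea: the potential is the full-space, $U$-weighted quantity $\phi(A,b)=\mathrm{Tr}\left[U^t\left(A-b\cdot Id\right)^{-1}U\right]$ with a positive, \emph{decreasing} barrier $b_{l+1}=b_l-\delta$, so that the kernel contributes the negative amount $-\Vert Q_lU\Vert_{\rm HS}^2/b_l$ and the invariant $\phi(A_l,b_l)\le\phi(A_0,b_0)$ is both true and informative. The inductively maintained inequality $b_l\le\delta\Vert Q_lU\Vert_{\rm HS}^2/\Vert U\Vert^2$ then makes the kernel part of the potential drop, $\Lambda_l^Q\ge\Vert U\Vert^2/b_{l+1}$, large enough to pay for the second-order Sherman--Morrison term in the selection condition $G_l(v)\ge 1/s$; this is what turns ``the averaged good term beats the averaged bad term'' into a verifiable inequality producing $|\sigma|\asymp n/\Vert T\Vert^2$ together with a constant lower bound on $s_{\min}(T_\sigma)$. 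If you replace your pseudoinverse potential and the invariant $\Phi_j\le\Phi_0$ by this weighted full-space potential with decreasing barrier (or supply an explicit accounting of the kernel mass of the remaining columns), your plan becomes the lower-bound half of the paper's proof; without such an ingredient the one-step lemma you invoke is not established, and the calibration $\delta\asymp M/n$, $\ell_k\ge c^2$ remains unproved.
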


In \cite{MR1826503}, Vershynin extended this result for any decomposition of the identity, whereas 
the previous result was valid for the canonical decomposition. Moreover the size of the 
restriction depended on the Hilbert-Schmidt norm of the operator. Precisely, Vershynin 
proved the following:

\begin{theoremeB}
Let $Id = \sum_{j\leqslant m}
x_j x_j^t$ and let $T$ be a linear operator on $\ell_2^n$. For
any $\varepsilon \in (0,1)$ one can find $\sigma \subset \{1,\ldots
,m\}$ with
$$\vert \sigma\vert \geqslant (1-\varepsilon)\frac{\Vert T\Vert_{\rm
HS}^2}{\Vert T\Vert_2^2}$$ such that
$$ c(\varepsilon) \left(\sum_{j\in \sigma }
a_j^2\right)^{\frac{1}{2}}\leqslant
\left\Vert \displaystyle \sum_{j\in\sigma} a_j \frac{Tx_j}{\Vert Tx_j\Vert_2}\right\Vert_2
\leqslant C(\varepsilon) \left(\sum_{j\in \sigma }
a_j^2\right)^{\frac{1}{2}}$$ for all scalars $(a_j)$.
\end{theoremeB}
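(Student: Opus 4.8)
\medskip
\noindent\textbf{Proof strategy.}
The plan is to pass to the images $y_j:=Tx_j$ and their normalizations $v_j:=y_j/\|y_j\|_2$ (discarding the indices with $Tx_j=0$), and to produce $\sigma$ by a deterministic, one-vector-at-a-time selection in the spirit of Batson--Spielman--Srivastava that keeps \emph{both} the smallest and the largest eigenvalue of the selected Gram matrix under control. First I would record the elementary consequences of $Id=\sum_{j\le m}x_jx_j^t$: each $x_jx_j^t\preceq Id$, so $\|x_j\|_2\le 1$; conjugating by $T$ gives $\sum_j y_jy_j^t=T\big(\sum_j x_jx_j^t\big)T^t=TT^t$, so $\sum_j\|y_j\|_2^2=\|T\|_{\rm HS}^2$ and $\|y_j\|_2\le\|T\|_2$ for every $j$. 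Setting $\widehat y_j:=y_j/\|T\|_2$ turns the data into a ``sub-decomposition'' $L:=\sum_j\widehat y_j\widehat y_j^t=TT^t/\|T\|_2^2$ with $0\preceq L\preceq Id$, ${\rm Tr}(L)=\|T\|_{\rm HS}^2/\|T\|_2^2=:\kappa$, $\|\widehat y_j\|_2\le 1$, and $v_j=\widehat y_j/\|\widehat y_j\|_2$. The goal becomes: find $\sigma$ with $|\sigma|\ge(1-\varepsilon)\kappa$ such that $A_\sigma:=\sum_{j\in\sigma}v_jv_j^t$ has rank $|\sigma|$ and, on its range, $c(\varepsilon)\,Id\preceq A_\sigma\preceq C(\varepsilon)\,Id$ --- which is exactly the two-sided Riesz estimate claimed.

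The lower half of this is essentially classical: since $\|\widehat y_j\|_2\le 1$ one has $v_jv_j^t\succeq\widehat y_j\widehat y_j^t$, hence $A_\sigma\succeq\sum_{j\in\sigma}\widehat y_j\widehat y_j^t$, so for the lower bound it is enough to find $\sigma$ with $\{\widehat y_j\}_{j\in\sigma}$ linearly independent and $\sum_{j\in\sigma}\widehat y_j\widehat y_j^t\succeq c(\varepsilon)\,Id$ on its span --- which is precisely what restricted invertibility (Theorem A, and the smallest-singular-value part of its extensions to general decompositions of the identity) gives when applied to $L$, for which $\|L\|_2\le 1$ and ${\rm Tr}(L)=\kappa$. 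The upper bound $\lambda_{\max}(A_\sigma)\le C(\varepsilon)$ is the real content: restricted invertibility is silent about it, and one cannot reduce to it by discarding the indices with $\|\widehat y_j\|_2$ small, because \emph{all} of them may be small --- splitting each $x_j$ into $M$ equal copies $x_j/\sqrt M$ scales every $\|\widehat y_j\|_2$ by $M^{-1/2}$ and leaves the conclusion untouched.

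To obtain both bounds at once I would run the Batson--Spielman--Srivastava barrier process directly on the family $\{v_jv_j^t\}$: keeping a set $S$ of chosen indices, $A=\sum_{j\in S}v_jv_j^t$, and two potentials $\Phi^u(A)={\rm Tr}((uId-A)^{-1})$ and $\Phi_\ell(A)={\rm Tr}((A-\ell\,Id)^{-1})$ read on the subspace $\operatorname{span}\{v_j:j\in S\}$ (as in Spielman--Srivastava's proof of restricted invertibility), at each of $k:=\lceil(1-\varepsilon)\kappa\rceil$ steps one adds a new index so that the rank grows by one while, after shifting $u,\ell$ by prescribed increments $\delta_u,\delta_\ell$, neither potential increases. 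The only place where the specific family enters is the averaging step guaranteeing that a feasible index exists, and the key twist is to run that averaging \emph{weighted by} $w_j:=\|\widehat y_j\|_2^2=\|Tx_j\|_2^2/\|T\|_2^2$: every sum occurring there then has the form $\sum_j w_j\,v_j^tMv_j=\sum_j\widehat y_j^tM\widehat y_j={\rm Tr}(ML)$ for a positive matrix $M$ built from the current barriers, which is controlled precisely because $L\preceq Id$ (so ${\rm Tr}(ML)\le{\rm Tr}(M)$), while the total weight $\sum_j w_j=\kappa$ plays the role that ${\rm Tr}(Id)=n$ plays in the classical sparsification argument. Calibrating $\delta_u,\delta_\ell$ as in Batson--Spielman--Srivastava and using that $k$ is a $(1-\varepsilon)$-fraction of $\kappa$, the process ends with $\ell_*\,Id\preceq A_\sigma\preceq u_*\,Id$ on $\operatorname{span}\{v_j:j\in\sigma\}$ and $u_*/\ell_*$ bounded by a function of $\varepsilon$ alone; since $\ell_*>0$, $A_\sigma$ then has exactly $|\sigma|$ nonzero eigenvalues, all in $[\ell_*,u_*]$, which is the assertion with $c(\varepsilon)=\ell_*$ and $C(\varepsilon)=u_*$.

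The step I expect to be the main obstacle is the bookkeeping of this two-barrier, weighted scheme rather than any single inequality. One must verify that the Sherman--Morrison computations for \emph{both} barriers survive the weighting (they should, the weights entering only the pigeonhole step and not the per-step potential inequalities), that the two budgets $\sum_j w_j(\text{lower gain at }j)$ and $\sum_j w_j(\text{upper cost at }j)$ remain compatible for $k$ as large as $(1-\varepsilon)\kappa$ (this is where the gap $\kappa-k$ is spent and where $c(\varepsilon),C(\varepsilon)$ degenerate as $\varepsilon\to 0$), and that carrying the potentials on the growing subspace $\operatorname{span}\{v_j:j\in S\}$ rather than on a fixed ambient space costs nothing --- in particular that some feasible index always adds a genuinely new direction, which is where $k\le\operatorname{rank}T$ (a consequence of $\kappa\le\operatorname{rank}T$) is used. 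If one prefers to sidestep the two-barrier computation, an alternative is to keep the lower bound from restricted invertibility as above and obtain the upper bound from an independent random selection, controlled by the noncommutative Khintchine (Rudelson) inequality for $\big\|\sum_{j\in\sigma}v_jv_j^t-\mathbb E(\cdot)\big\|_2$, at the cost of a spurious logarithmic factor in $|\sigma|$ that is then removed by iterating the selection on successive residuals in the manner of Bourgain--Tzafriri --- correct, but markedly messier than the barrier argument.
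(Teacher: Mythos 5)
Your strategy --- normalize to $v_j = Tx_j/\Vert Tx_j\Vert_2$, record that $\sum_j (Tx_j)(Tx_j)^t = TT^t$, and run a two-barrier Batson--Spielman--Srivastava process on $\{v_jv_j^t\}$ with the averaging weighted by $\Vert Tx_j\Vert_2^2$ --- is in essence the route the paper takes: it packages this as Theorem~\ref{big block-colomn selection-smallest and largest singular value} for an arbitrary matrix $U$ (applied in Proposition~\ref{prop-amelioration-vershynin} to the $n\times m$ matrix whose columns are $Tx_j$), with potentials $\phi(A,b)={\rm Tr}\bigl(U^t(A-b\cdot Id)^{-1}U\bigr)$ and $\psi(A,u)={\rm Tr}\bigl(U^t(u\cdot Id-A)^{-1}U\bigr)$; the ``weighted averaging'' you describe is exactly the effect of carrying the factor $UU^t$ inside the potentials instead of keeping the weights outside in the pigeonhole step.

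There is, however, one step the paper performs that your outline omits, and it is not mere bookkeeping. The BSS-style averaging only shows that some index $j$ satisfies $F_l(v_j)\leqslant G_l(v_j)$, so what the process naturally produces is a \emph{weighted} matrix $A_k=\sum_{j\in\sigma}s_j v_jv_j^t$ with per-step scalars $s_j$ chosen so that $F_l(v_j)\leqslant 1/s_j\leqslant G_l(v_j)$ --- you cannot simply add $v_jv_j^t$ with scalar $1$, because the averaging does not give $F_l(v_j)\leqslant 1\leqslant G_l(v_j)$. Yet the theorem is about the \emph{unweighted} Gram matrix $\sum_{j\in\sigma}v_jv_j^t=\widetilde U_\sigma\widetilde U_\sigma^t$. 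The paper closes this gap by proving that for every \emph{unit} vector $v$ and every step $l$ one has $G_l(v)\leqslant 1/(\varepsilon b_0)$ and $F_l(v)\geqslant 1/((2-\varepsilon)u_0)$, hence $\varepsilon b_0\leqslant s_j\leqslant (2-\varepsilon)u_0$, so the weights can be stripped off at the cost of the ratio $(2-\varepsilon)u_0/(\varepsilon b_0)$. This trap-the-weights estimate uses the normalization $\Vert v_j\Vert_2=1$ (which is why the statement is about $Tx_j/\Vert Tx_j\Vert_2$ and not $Tx_j$) together with the upper barrier for the lower bound on $s_j$ and vice versa; it is a genuine extra ingredient not present in the original BSS sparsification, where no useful two-sided control on the weights is available. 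Your sketch should be amended to reintroduce the scalars $s_j$ in the iteration and then add this bounding argument, after which the rest of your plan matches the paper's proof.
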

Vershynin  applied this result to the study of contact points. The normalization 
on the vectors $Tx_j$ is crucial for the applications and the dependence 
on $\varepsilon$ plays an important role.
Spielman-Srivastava \cite{MR2956233} generalized the restricted invertibility principle 
(Theorem A)  
for any decomposition of the identity without the normalizing factors appearing in 
Vershynin's result and improved the dependence on $\varepsilon$. In \cite{P.Y}, 
we unified the two previous results obtaining a good dependence 
on $\varepsilon$ for any normalizing factors. However this result deals only 
with the lower bound and doesn't give any information on the norm of the restricted 
matrix. Our aim here, is to improve Vershynin's result obtaining simultaneously 
a restricted invertibility principle and an estimate on the norm of the restricted matrix. 
Our proof uses ideas of the method of Batson-Spielman-Srivastava (\cite{MR2780071}, 
\cite{MR2941475}, see also \cite{P.Y} for related topics). \\

The main result of this paper is the following:

\begin{theo}\label{big block-colomn selection-smallest and largest singular value}
Let $U$ be an $n\times m$ matrix and denote by $\widetilde{U}$ the matrix 
whose columns are the columns of $U$ normalized.
For all $\varepsilon \in (0,1)$, there 
exists $\sigma\subset \{1,...,m\}$ of size
$$
\vert \sigma\vert \geqslant (1-\varepsilon)^2\frac{\Vert U\Vert_{\rm HS}^2}{\Vert U\Vert^2}
$$
such that 
$$
\frac{\varepsilon}{2-\varepsilon}\leqslant s_{\min}\left(\widetilde{U}_\sigma\right)
\leqslant s_{\max}\left(\widetilde{U}_\sigma\right)\leqslant \frac{2-\varepsilon}{\varepsilon}
$$
In other terms, for all $(a_j)_{j\in\sigma}$
$$
\frac{\varepsilon}{2-\varepsilon}\left(\sum_{j\in\sigma} a_j^2\right)^{\frac{1}{2}}\leqslant 
\left\Vert \sum_{j\in\sigma} a_j\frac{Ue_j}{\Vert Ue_j\Vert_2}\right\Vert_2\leqslant \frac{2-\varepsilon}{\varepsilon}\left(\sum_{j\in\sigma} a_j^2\right)^{\frac{1}{2}}.\\
$$
\\
\end{theo}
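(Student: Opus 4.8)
The plan is to run a deterministic greedy selection in the spirit of Batson--Spielman--Srivastava: add one normalized column at a time, and keep the spectrum of the partial Gram matrix trapped between two moving ``barriers'', an upper one controlling $s_{\max}$ and a lower one controlling $s_{\min}$. First I would reformulate. Discard the indices with $Ue_j=0$, write $v_j=Ue_j/\Vert Ue_j\Vert_2$ and $\mu_j=\Vert Ue_j\Vert_2^2/\Vert U\Vert^2$, so that
$$
\sum_j \mu_j v_jv_j^{\,t}=\frac{1}{\Vert U\Vert^2}\,UU^t=:P\preceq \mathrm{Id},\qquad \mathrm{Tr}(P)=\sum_j\mu_j=\frac{\Vert U\Vert_{\rm HS}^2}{\Vert U\Vert^2}=:r.
$$
Since $\widetilde U_\sigma\widetilde U_\sigma^{\,t}=\sum_{j\in\sigma}v_jv_j^{\,t}$, whose non-zero eigenvalues are the $s_i(\widetilde U_\sigma)^2$, it suffices to produce $\sigma$ with $|\sigma|\geqslant(1-\varepsilon)^2 r$, with the $v_j$ ($j\in\sigma$) linearly independent, and with $(\varepsilon/(2-\varepsilon))^2\,\Pi_\sigma\preceq\sum_{j\in\sigma}v_jv_j^{\,t}\preceq((2-\varepsilon)/\varepsilon)^2\,\mathrm{Id}$, where $\Pi_\sigma$ is the orthogonal projection onto $\mathrm{span}\{v_j:j\in\sigma\}$.

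Now the iteration. Put $A_0=0$ and, at step $q$, select an index $j_q$ and set $A_q=A_{q-1}+v_{j_q}v_{j_q}^{\,t}$, $\sigma=\{j_1,\dots,j_k\}$. I would carry two parameters $u_q=u_0+q\,\delta_u$, kept strictly above $\lambda_{\max}(A_q)$, and $\ell_q=\ell_0+q\,\delta_\ell$ with $\ell_0<0$, kept strictly below the smallest non-zero eigenvalue of $A_q$, controlled through the potentials $\Phi^{u}(A)=\mathrm{Tr}\big((u\,\mathrm{Id}-A)^{-1}P\big)$ and $\Phi_{\ell}(A)=\mathrm{Tr}\big((A-\ell\,\mathrm{Id})^{-1}\big)$ (the latter understood on the current range once $\ell_q>0$). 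Weighting the upper potential by $P$ is the device that makes the averaging step below produce traces against $P$, hence quantities of size $\mathrm{Tr}(P)=r$ rather than of size $n$; this is what lets the final bounds depend on $\varepsilon$ only. Taking $u_0\asymp_\varepsilon r\,\delta_u$ (and symmetrically $|\ell_0|\asymp_\varepsilon r\,\delta_\ell$) makes the initial potentials of the right size, and the linear motion of the barriers gives the conditioning bound.

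The crux is a one-step selection lemma: \emph{if both barriers hold at step $q-1$ then there is $j$ with $\mu_j>0$ such that $A_{q-1}+v_jv_j^{\,t}$ still meets the updated barriers $u_q,\ell_q$ with non-increasing potentials and such that $v_j\notin\mathrm{span}\{v_{j_1},\dots,v_{j_{q-1}}\}$.} By the Sherman--Morrison rank-one update formula, the upper condition ``$A+v_jv_j^{\,t}\prec u_q\,\mathrm{Id}$ and $\Phi^{u_q}(A+v_jv_j^{\,t})\leqslant\Phi^{u_{q-1}}(A)$'' becomes $U_A(v_j)\leqslant 1$ for an explicit functional $U_A\geqslant 0$, and the lower condition becomes $L_A(v_j)\geqslant 1$ for an explicit $L_A$. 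Averaging $U_A$ and $L_A$ against the weights $\mu_j$, using $\sum_j\mu_j v_jv_j^{\,t}=P\preceq\mathrm{Id}$ and the inductive potential bounds, yields $\sum_j\mu_j U_A(v_j)\leqslant\alpha(\varepsilon)$ and $\sum_j\mu_j L_A(v_j)\geqslant r-\beta(\varepsilon)$; choosing $\delta_u,\delta_\ell$ so that the $\mu$-mass of $\{U_A(v_j)>1\}$ (bounded by Markov) plus the $\mu$-mass of $\{L_A(v_j)<1\}$ stays strictly below $r=\sum_j\mu_j$ forces the two ``good'' sets to intersect, giving the desired $j$. Iterating, the barriers can be maintained for at least $k$ steps with $k\geqslant(1-\varepsilon)^2 r$ once $\delta_u,\delta_\ell,u_0,\ell_0$ are tuned so that $\ell_k\geqslant(\varepsilon/(2-\varepsilon))^2$ while $u_k\leqslant((2-\varepsilon)/\varepsilon)^2$; since the chosen $v_j$ are linearly independent, $\widetilde U_\sigma$ has exactly $k$ non-zero singular values, all in $[\varepsilon/(2-\varepsilon),(2-\varepsilon)/\varepsilon]$, as claimed.

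I expect the main obstacle to be the lower barrier: $A_q$ has rank at most $q<n$, so once $\ell_q$ crosses $0$ the potential $\mathrm{Tr}\big((A_q-\ell_q\,\mathrm{Id})^{-1}\big)$ must be read on the range of $A_q$, and one has to carry in the induction both that $\ell_q$ stays below $\lambda_{\min}^{+}(A_q)$ and that each new column strictly enlarges the span, so that the zero eigenvalue never pollutes the barrier and it is $s_{\min}(\widetilde U_\sigma)$ — not merely the $k$-th eigenvalue — that is bounded; controlling how $\Phi_\ell$ jumps when the range grows, and closing the bookkeeping with exactly the constants $(1-\varepsilon)^2$ and $\varepsilon/(2-\varepsilon)$, is where the work is. A cleaner alternative, which also explains the shape $(1-\varepsilon)^2$, is to split the statement: first apply the restricted invertibility principle of \cite{P.Y} to get $\tau$ with $|\tau|\geqslant(1-\varepsilon)r$ and $s_{\min}(\widetilde U_\tau)\geqslant\varepsilon/(2-\varepsilon)$, then run only the upper-barrier half of the above argument inside $\tau$ to extract $\sigma\subseteq\tau$ with $|\sigma|\geqslant(1-\varepsilon)|\tau|$ and $s_{\max}(\widetilde U_\sigma)\leqslant(2-\varepsilon)/\varepsilon$, using that passing to a sub-family of columns can only increase $s_{\min}$.
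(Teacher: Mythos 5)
Your main route is the same two-barrier Batson--Spielman--Srivastava scheme as the paper's, but it stalls exactly at its crux: the one-step selection lemma asserting an index $j$ for which the \emph{unweighted} update $A_{q-1}+v_jv_j^{\,t}$ respects both shifted barriers (i.e.\ $U_A(v_j)\leqslant 1\leqslant L_A(v_j)$) is stated, not proved. Averaging against the $\mu_j$ only gives bounds on $\sum_j\mu_j U_A(v_j)$ and $\sum_j\mu_j L_A(v_j)$; to convert the latter into a lower bound on the $\mu$-mass of $\{L_A\geqslant 1\}$ you also need a pointwise upper bound on $L_A$ at unit vectors, and when one inserts the natural bounds (the analogues of the paper's $G_l(v)\leqslant 1/(\varepsilon b_0)$ and $F_l(v)\geqslant 1/((2-\varepsilon)u_0)$) the two mass estimates do not visibly leave room for the two good sets to intersect with the announced constants $(1-\varepsilon)^2$ and $\varepsilon/(2-\varepsilon)$. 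This is precisely why the paper does \emph{not} insist on weight one: it merely selects $i$ with $F_l(\widetilde{U}e_i)\leqslant G_l(\widetilde{U}e_i)$ (which \emph{does} follow from comparing the two averaged sums), adds $s_i(\widetilde{U}e_i)(\widetilde{U}e_i)^t$ with any admissible weight $s_i$ (Lemma~\ref{lem-lower-upper-bound}), and only at the end bounds the weights pointwise, $\varepsilon b_0\leqslant s_i\leqslant(2-\varepsilon)u_0$; the ratio of these two bounds is exactly where the factor $\bigl((2-\varepsilon)/\varepsilon\bigr)^2$ in the condition number comes from. Likewise, your lower barrier starts negative and must cross zero on a growing range --- you flag this as ``where the work is'' but do not resolve it --- whereas the paper keeps $b_l>0$ throughout, requires only that $A_l$ have $l$ eigenvalues above $b_l$, and controls the kernel contribution through the invariant $b_l\leqslant\delta\Vert Q_lU\Vert_{\rm HS}^2/\Vert U\Vert^2$. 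So the sketch, as written, has a genuine gap at its central step.

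The ``cleaner alternative'' at the end is not a repair: composing restricted invertibility with an upper-barrier (Kashin--Tzafriri type) extraction fails, because the first step gives no control on $\Vert\widetilde{U}_\tau\Vert$, and the second step can only return a subset of size proportional to the stable rank $|\tau|/\Vert\widetilde{U}_\tau\Vert^2$, not to $|\tau|$. Concretely, with $c=\varepsilon/(2-\varepsilon)$ take $v_j=\sqrt{1-c^2}\,w+c\,w_j$, $j\leqslant N$, where $w,w_1,\dots,w_N$ are orthonormal: this standardized family has $s_{\min}=c$, so (embedding it as blocks of a block-diagonal $U$ with many such blocks) a set $\tau$ consisting of whole blocks is a perfectly legitimate output of step one, of size $(1-\varepsilon)\,{\rm srank}(U)$; yet the Gram matrix of any $k$ of these vectors is $c^2 Id_k+(1-c^2)J_k$, so $s_{\max}^2\geqslant c^2+(1-c^2)k$ and any subfamily with $s_{\max}\leqslant(2-\varepsilon)/\varepsilon$ has $k=O(\varepsilon^{-2})$, far below $(1-\varepsilon)|\tau|$ when $N$ is large. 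The two spectral constraints must therefore be enforced simultaneously within a single selection step, which is exactly what the paper's merged condition (\ref{eq-lower-upper-bound}) accomplishes.
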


Note that the lower bound problem is the restricted invertibility problem treated in \cite{MR2956233} and \cite{P.Y} while the upper bound 
problem is related to the sparsification method and the Kashin-Tzafriri column selection theorem \cite{kashin-tzafriri} treated respectively in \cite{MR2780071} 
and \cite{P.Y}. Our idea is to merge the two algorithms together to get the two conclusions simultaneously. The heart of these methods 
is the study of the evolution of the eigenvalues of a matrix when perturbated by a rank one matrix.

In the regime where $\varepsilon$ is close to one, the previous result yields the following:
\begin{coro}\label{small block-colomn selection-smallest and largest singular value}
Let $U$ be an $n\times m$ matrix and denote by $\widetilde{U}$ the matrix 
whose columns are the columns of $U$ normalized.
For all $\varepsilon \in (0,1)$, there 
exists $\sigma\subset \{1,...,m\}$ of size
$$
\vert \sigma\vert \geqslant \frac{\varepsilon^2}{9}\cdot\frac{\Vert U\Vert_{\rm HS}^2}{\Vert U\Vert^2}
$$
such that 
$$
1-\varepsilon \leqslant s_{\min}\left(\widetilde{U}_\sigma\right)
\leqslant s_{\max}\left(\widetilde{U}_\sigma\right)\leqslant 1+\varepsilon
$$
In other terms, for all $(a_j)_{j\in\sigma}$
$$
(1-\varepsilon)\left(\sum_{j\in\sigma} a_j^2\right)^{\frac{1}{2}}\leqslant 
\left\Vert \sum_{j\in\sigma} a_j\frac{Ue_j}{\Vert Ue_j\Vert_2}\right\Vert_2\leqslant (1+\varepsilon)\left(\sum_{j\in\sigma} a_j^2\right)^{\frac{1}{2}}
$$
Where $(e_j)_{j\leqslant m}$ denotes the canonical basis of $\mathbb{R}^m$.
\end{coro}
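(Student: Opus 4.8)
The plan is to deduce Corollary~\ref{small block-colomn selection-smallest and largest singular value} from Theorem~\ref{big block-colomn selection-smallest and largest singular value} by a judicious choice of the free parameter appearing there. The key observation is that the two singular-value bounds $\frac{\varepsilon}{2-\varepsilon}$ and $\frac{2-\varepsilon}{\varepsilon}$ in Theorem~\ref{big block-colomn selection-smallest and largest singular value} are mutual reciprocals, and both converge to $1$ as the parameter of that theorem approaches $1$; so one should run Theorem~\ref{big block-colomn selection-smallest and largest singular value} with a parameter close to $1$, tuned so that the larger bound $\frac{2-\varepsilon}{\varepsilon}$ drops below $1+\varepsilon$, where $\varepsilon$ now denotes the parameter of the corollary.

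Concretely, given $\varepsilon\in(0,1)$ as in the statement of the corollary, I would apply Theorem~\ref{big block-colomn selection-smallest and largest singular value} with parameter $\varepsilon':=\frac{2}{2+\varepsilon}$, which lies in $(2/3,1)\subset(0,1)$ and is therefore admissible. This yields $\sigma\subset\{1,\dots,m\}$ with $|\sigma|\geqslant(1-\varepsilon')^2\,\frac{\|U\|_{\rm HS}^2}{\|U\|^2}$ such that $\frac{\varepsilon'}{2-\varepsilon'}\leqslant s_{\min}(\widetilde U_\sigma)\leqslant s_{\max}(\widetilde U_\sigma)\leqslant\frac{2-\varepsilon'}{\varepsilon'}$.

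The remaining work is to rewrite these three estimates in terms of $\varepsilon$, all of it elementary. For the cardinality, $1-\varepsilon'=\frac{\varepsilon}{2+\varepsilon}$ and $2+\varepsilon<3$ give $(1-\varepsilon')^2\geqslant\frac{\varepsilon^2}{9}$. For the upper bound, $\frac{2-\varepsilon'}{\varepsilon'}=\frac{2}{\varepsilon'}-1=(2+\varepsilon)-1=1+\varepsilon$. For the lower bound, $\frac{\varepsilon'}{2-\varepsilon'}=\frac{1}{1+\varepsilon}\geqslant 1-\varepsilon$, since $(1-\varepsilon)(1+\varepsilon)=1-\varepsilon^2\leqslant 1$. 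Finally the equivalent formulation with the scalars $(a_j)_{j\in\sigma}$ is just the identity $\|\widetilde U_\sigma a\|_2=\bigl\|\sum_{j\in\sigma}a_j\frac{Ue_j}{\|Ue_j\|_2}\bigr\|_2$ together with the variational characterisation of $s_{\min}$ and $s_{\max}$.

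There is no real obstacle here: the entire content is already packaged in Theorem~\ref{big block-colomn selection-smallest and largest singular value}, and the corollary is a one-line substitution followed by the inequalities above. The only point worth a moment's thought is why $\frac{2}{2+\varepsilon}$ is the right parameter: comparing the requirements $\frac{2-\varepsilon'}{\varepsilon'}\leqslant 1+\varepsilon$ and $\frac{\varepsilon'}{2-\varepsilon'}\geqslant 1-\varepsilon$, one sees that the first forces $\varepsilon'\geqslant\frac{2}{2+\varepsilon}$ while the second only forces $\varepsilon'\geqslant\frac{2-2\varepsilon}{2-\varepsilon}$, and the former is the larger threshold; taking equality in it is optimal and lands the size of $\sigma$ comfortably above $\frac{\varepsilon^2}{9}\,\frac{\|U\|_{\rm HS}^2}{\|U\|^2}$.
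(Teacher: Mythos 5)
Your proof is correct and takes exactly the approach the paper intends (which it leaves implicit, merely remarking that the corollary follows by taking the parameter of Theorem~\ref{big block-colomn selection-smallest and largest singular value} close to one): substitute $\varepsilon'=\frac{2}{2+\varepsilon}$, observe $\frac{2-\varepsilon'}{\varepsilon'}=1+\varepsilon$, $\frac{\varepsilon'}{2-\varepsilon'}=\frac{1}{1+\varepsilon}\geqslant 1-\varepsilon$, and $(1-\varepsilon')^2=\frac{\varepsilon^2}{(2+\varepsilon)^2}\geqslant\frac{\varepsilon^2}{9}$, all of which you verify correctly. A slightly more symmetric alternative is $\varepsilon'=1-\varepsilon/3$, which yields the constant $\frac{\varepsilon^2}{9}$ exactly, but your parametrization is equally valid and in fact gives a marginally larger $\sigma$.
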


This result is also related to the problem of column paving, that is, 
partitioning the columns into sets such that each of the corresponding 
restrictions has "good" bounds on the singular values, in particular 
such that the singular values are close to one. We will show how 
our Theorem allows us to recover a result of Tropp \cite{MR2807539} (and of 
Bourgain-Tzafriri \cite{MR890420})
dealing with this problem, using our deterministic method instead 
of the probabilistic methods used previously.\\

In a survey \cite{naor} on Batson-Spielman-Srivastava's sparsification theorem, 
Naor asked about giving a proof of another theorem of Bourgain-Tzafriri \cite{MR890420},
which is stronger than the restricted invertibility, using tools 
from Batson-Spielman-Srivastava's method. The theorem in question is the following:

\begin{theoremeC}[Bourgain-Tzafriri]
There is a universal constant $c>0$ such that for every $\varepsilon \in (0, 1)$ and
$n\in \mathbb{N}$, if $T$ is an $n\times n$ complex matrix satisfying $\langle
Te_i,e_i\rangle=0$ for all $i\in \{1,\ldots,n\}$ then there exists a
subset $\sigma\subseteq \{1,\ldots, n\}$ with $|\sigma|\geqslant
c\varepsilon^2 n$ and $\|P_\sigma TP_\sigma^*\|\leqslant \varepsilon\|T\|.$
\end{theoremeC}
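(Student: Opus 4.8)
The plan is to deduce Theorem C from our main result (Theorem~\ref{big block-colomn selection-smallest and largest singular value}) applied to a cleverly chosen auxiliary matrix, rather than reworking the Batson--Spielman--Srivastava machinery from scratch. The key observation is that controlling $\|P_\sigma T P_\sigma^*\|$ for a hollow matrix $T$ is, after a standard decomposition of $T$ into rank-one or low-rank pieces, a statement about the \emph{upper} singular value of a column-selected submatrix. First I would normalize: assume $\|T\|=1$. Writing $T = \sum_{i} (\text{row/column contributions})$, one passes to a factorization $T = A^* B$ with $A, B$ having columns of controlled norm; the hollowness $\langle Te_i,e_i\rangle = 0$ is precisely what lets us arrange that the $i$-th columns $Ae_i$ and $Be_i$ are ``orthogonal'' in the relevant bilinear sense, so that on any coordinate block $\sigma$ the diagonal of $P_\sigma T P_\sigma^*$ vanishes and the norm is governed entirely by the off-diagonal interaction. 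The standard reduction (as in Bourgain--Tzafriri, or Tropp's treatment of the paving problem which we recover in this paper) shows it suffices to bound the norm of $P_\sigma U P_\sigma^*$ where $U$ is built from $T$ and has stable rank comparable to $n$.

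The heart of the argument is then to apply Theorem~\ref{big block-colomn selection-smallest and largest singular value} (or Corollary~\ref{small block-colomn selection-smallest and largest singular value}) to the normalized matrix $\widetilde{U}$ associated to this $U$. Since $\|T\|=1$ and the columns feeding into $U$ have norms bounded below by a constant (this is where the hollow hypothesis and the normalization interact — the columns cannot be too small because the operator norm is $1$), the stable rank satisfies $srank(U) \geqslant c' n$. Choosing $\varepsilon$ appropriately in our theorem (proportional to the target $\varepsilon$ of Theorem~C, up to absolute constants), we extract $\sigma$ with $|\sigma| \geqslant (1-\delta)^2 srank(U) \geqslant c \varepsilon^2 n$ and with $s_{\max}(\widetilde{U}_\sigma)$ bounded. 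Translating the bound on $s_{\max}(\widetilde{U}_\sigma)$ back through the normalization (multiplying by the column norms, each at most $\|T\| = 1$) and through the factorization $T = A^*B$ yields $\|P_\sigma T P_\sigma^*\| \leqslant \varepsilon$, after absorbing constants.

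I expect the main obstacle to be the bookkeeping in the reduction from the quadratic form $P_\sigma T P_\sigma^*$ to a column-selection problem for a single matrix, together with quantitatively tracking how the hollowness is used: one must ensure that selecting columns in $\sigma$ really does kill the diagonal contribution of $P_\sigma T P_\sigma^*$ and not merely control it. A clean way is to split $T = T_1 + T_2$ according to a random (then derandomized) partition of $[n]$ into two halves and note $P_\sigma T P_\sigma^* $ only sees the cross terms; alternatively, decompose $T$ into a bounded number of matrices each supported on a ``staircase'' pattern with empty diagonal, apply the selection to each, and intersect. A secondary technical point is that Theorem~C is stated for complex matrices while our theorem is real; this is handled routinely by viewing $\mathbb{C}^n$ as $\mathbb{R}^{2n}$, which costs only absolute constants in the stable rank and in $|\sigma|$. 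Since all losses are by absolute constants, the final bound $|\sigma| \geqslant c\varepsilon^2 n$ with $\|P_\sigma T P_\sigma^*\| \leqslant \varepsilon\|T\|$ comes out with the claimed form.
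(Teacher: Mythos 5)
There is a genuine gap, and it is the central idea of the paper's argument. The mechanism that makes $\|P_\sigma TP_\sigma^*\|$ \emph{small} is not an upper bound on the largest singular value of a selected submatrix; it is cancellation against a multiple of the identity. The paper (for $T$ Hermitian with zero diagonal) sets $A=T+\|T\|\cdot Id\succeq 0$ and $U=A^{1/2}$; hollowness gives $\|Ue_i\|_2^2=\langle Ae_i,e_i\rangle=\|T\|$ for every $i$, so $\widetilde U=U/\|T\|^{1/2}$ is standardized with $\|\widetilde U\|^2\leqslant 2$, hence stable rank at least $n/2$. Applying the main theorem with parameter $1-\alpha\varepsilon$ (for a suitable absolute $\alpha$) yields $\sigma$ with $|\sigma|\geqslant c\varepsilon^2 n$ on which \emph{both} $s_{\min}(\widetilde U_\sigma)$ and $s_{\max}(\widetilde U_\sigma)$ lie within $1\pm O(\varepsilon)$, i.e. $(1-\varepsilon)\|T\|\cdot Id_\sigma\preceq P_\sigma AP_\sigma^*\preceq(1+\varepsilon)\|T\|\cdot Id_\sigma$; subtracting $\|T\|\cdot Id_\sigma$ then gives $\|P_\sigma TP_\sigma^*\|\leqslant\varepsilon\|T\|$. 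Your proposal never introduces this positive semidefinite shift, and your plan to conclude from a bound on $s_{\max}(\widetilde U_\sigma)$ alone cannot work: once columns are normalized, $s_{\max}(\widetilde U_\sigma)\geqslant 1$ for any nonempty $\sigma$, and through a factorization $T=A^*B$ the resulting estimate $\|P_\sigma TP_\sigma^*\|\leqslant\|A_\sigma\|\,\|B_\sigma\|$ is of order at least $1$, never of order $\varepsilon$. No bookkeeping in the reduction fixes this; the lower singular value bound (the restricted invertibility half of the main theorem) is indispensable here.

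A second problem is scope. The shift trick requires $T+\|T\|\cdot Id$ to be positive semidefinite, which is why the paper proves Theorem~C only for Hermitian matrices and answers Naor's question only partially. Your claimed routine extensions do not close this: viewing $\mathbb{C}^n$ as $\mathbb{R}^{2n}$ does not make a non-normal matrix symmetric (complex Hermitian matrices are already covered directly), and the suggested splitting of $[n]$ into halves does not help because for $\sigma$ contained in one cell the compression $P_\sigma TP_\sigma^*$ is exactly a diagonal block of $T$, not a cross term. So the general complex case of Theorem~C is not reached by this route, and the Hermitian case is not reached by the argument as you have outlined it.
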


Using our result, we will be able to give a deterministic algorithm to solve this 
problem for hermitian matrices.\\

The paper is organized as follows: in section $2$, we prove our main result, in 
section $3$ we give applications to the local theory of Banach spaces and compare it 
with Vershynin's result. In section $4$, we prove column paving results and finally 
in section $5$ we answer Naor's question.

%%%%%%%%%%%%%%%%%%%%%%%%%%%%%%%%%%%%%%%%%%%%%%%%%%%%%%%%%%%%%%%%%%%%%%%%%%%%%%%%%%%%%%%%%%%%%%%%
%%%%%%%%%%%%%%%%%%%%%%%%%%%%%%%%%%%%%%%%%%%%%%%%%%%%%%%%%%%%%%%%%%%%%%%%%%%%%%%%%%%%%%%%%%%%%%%%
\section{Proof of Theorem~\ref{big block-colomn selection-smallest and largest singular value}}
%%%%%%%%%%%%%%%%%%%%%%%%%%%%%%%%%%%%%%%%%%%%%%%%%%%%%%%%%%%%%%%%%%%%%%%%%%%%%%%%%%%%%%%%%%%%%%%%
%%%%%%%%%%%%%%%%%%%%%%%%%%%%%%%%%%%%%%%%%%%%%%%%%%%%%%%%%%%%%%%%%%%%%%%%%%%%%%%%%%%%%%%%%%%%%%%%

Note $k=\vert \sigma\vert = (1-\varepsilon)^2\frac{\Vert U\Vert_{\rm HS}^2}{\Vert U\Vert^2}$ and 
$$
A_k =\sum_{j\in\sigma}s_j\left(\widetilde{U}e_j\right)\cdot \left(\widetilde{U}e_j\right)^t,
$$
where $s_j$ are positive numbers which will be determined later.
 Since our aim is to find $\sigma$ 
such that the smallest singular value of $\widetilde{U}_\sigma$ is bounded away from zero and its largest 
one is upper bounded, it is equivalent to try to construct the matrix $A_k$ 
such that $A_k$ has $k$ eigenvalues bounded away from zero 
and bounded from above and to estimate the weights $s_j$. Our construction will be done step by step 
starting from $A_0=0$. So at the beginning, all the eigenvalues of $A_0$ are zero. At the first step we will try to find 
a vector $v$ among the columns of $\widetilde{U}$ and a weight $s$ such that $A_1=A_0+svv^t$ has one nonzero 
eigenvalue which have a lower and upper bound. Of course the first step is trivial, since for whatever column we choose 
the matrix $A_1$ will have one eigenvalue equal to $s$. At the second step, we will try to find a vector $v$ among the 
columns of $\widetilde{U}$ and a weight $s$ such that $A_2=A_1+svv^t$ has two nonzero eigenvalues for which we can 
update the lower and upper bound found in the first step. We will continue this procedure until we construct the matrix $A_k$.\\

For a symmetric matrix $A$ such that $b< \lambda_{\min} (A) \leqslant \lambda_{\max} (A) < u$, we define:
$$
\phi (A, b) = {\rm Tr}\left( U^t \left(A-b\cdot Id\right)^{-1}U\right) \quad \text{ and }\quad \psi (A, u)={\rm Tr}\left( U^t \left(u\cdot Id-A\right)^{-1}U\right)
$$
For $l\leqslant k$, we denote by $b_l$ the lower bound of the $l$ nonzero eigenvalues of $A_l$ and by $u_l$ the upper bound 
i.e. $A_l\prec u_l\cdot Id$ and has $l$ eigenvalues $>b_l$.
We also note
$$
\phi= \phi(A_0,b_0)= -\frac{\Vert U\Vert_{\rm HS}^2}{b_0} \quad \text{ and } \quad \psi=\psi(A_0,u_0)=\frac{\Vert U\Vert_{\rm HS}^2}{u_0},
$$ 
where $b_0$ and $u_0$ will be determined later.\\

As we said before, we want to control the evolution of the eigenvalues 
so we will make sure to choose a "good" vector so that our 
bounds $b_l$ and $u_l$ do not move too far. Precisely, we will fix 
this amount of change and denote it by $\delta$ for the lower bound and $\Delta$ for the upper bound i.e. 
at the next step the lower bound will be $b_{l+1}=b_l-\delta$ and the upper one $u_{l+1}=u_l+\Delta$. We will 
choose these two quantities as follows:
$$
\delta =(1-\varepsilon)\frac{b_0}{k}=\frac{b_0\Vert U\Vert^2}{(1-\varepsilon)\Vert U\Vert_{\rm HS}^2} \quad \text{ and } 
\quad \Delta= (1-\varepsilon)\frac{u_0}{k}=\frac{u_0\Vert U\Vert^2}{(1-\varepsilon)\Vert U\Vert_{\rm HS}^2}
$$
Our choice of $\delta$ is motivated by the fact that after $k$ steps we want the updated lower bound to remain positive 
but not too small due to some obstructions in the proof.
In this case the final lower bound will be 
$$
b_k=b_{k-1}-\delta=...=b_0-k\delta= \varepsilon b_0
$$
 
The choice of $\Delta$ is motivated by the fact that we don't want the upper bound to move too far from the initial one. %and as for $\delta$ we have some obstruction on taking its value too small. 
The final upper bound will be
$$
u_k=u_{k-1}+\Delta=...=u_0+k\Delta=(2-\varepsilon)u_0
$$

\begin{defi}
We will say that a positive semidefinite matrix $A$ satisfies the $l$-requirement if the following properties
 are verified:

\begin{itemize}
\item $A\prec u_l \cdot Id$. 
\item $A$ has $l$ eigenvalues $>b_l$.
\item $\phi (A,b_l)\leqslant \phi$.
\item $\psi (A,u_l)\leqslant \psi$.
\end{itemize}
\end{defi}

In order to construct $A_{l+1}$ which has $l+1$ nonzero eigenvalues larger than $b_{l+1}$ and 
such that $\phi(A_{l+1},b_{l+1})\leqslant \phi(A_l,b_l)$, we may look at the algorithm used for the restricted 
invertibility problem (\cite{MR2956233}, \cite{P.Y}) and more precisely, at the condition needed on the vector $v$ 
to be chosen:

\begin{lem}
If $A_l$ has $l$ nonzero eigenvalues greater than $b_l$ and if for some vector $v$ and some positive scalar $s$ we have
\begin{equation}\label{eq-lowerbound}
G_l(v):=-\frac{v^t\left(A_l-b_{l+1}\cdot Id\right)^{-2}v}{\phi(A_l,b_l)-\phi(A_l,b_{l+1})}\cdot \Vert U\Vert^2 - v^t\left(A_l-b_{l+1}\cdot Id\right)^{-1}v\geqslant \frac{1}{s}.
\end{equation}
Then $A_{l+1}=A_l+svv^t$ has $l+1$ nonzero eigenvalues all greater than $b_{l+1}$ and $\phi(A_{l+1},b_{l+1})\leqslant \phi(A_l,b_l)$.
\end{lem}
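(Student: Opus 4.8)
The plan is to carry out the lower-barrier step of the Batson--Spielman--Srivastava scheme, so that everything reduces to one rank-one update handled by the Sherman--Morrison formula. I would set $B:=A_l-b_{l+1}\cdot Id$ and $M:=UU^t$, so that $\phi(A,b)={\rm Tr}\big(M(A-b\cdot Id)^{-1}\big)$ for every symmetric $A$. From the hypothesis, $A_l$ has exactly $l$ eigenvalues above $b_{l+1}$ (its $l$ eigenvalues larger than $b_l$), the other $n-l$ being $0$; since $b_{l+1}>0$, the matrix $B$ is invertible with exactly $l$ positive and $n-l$ negative eigenvalues, so ${\rm sign}(\det B)=(-1)^{n-l}$. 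One also checks $P:=\phi(A_l,b_l)-\phi(A_l,b_{l+1})>0$, since $b\mapsto\phi(A_l,b)$ is strictly increasing on the eigenvalue-free interval $[b_{l+1},b_l]$; hence $G_l(v)$ is well defined and its first summand is $\le0$.

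\textbf{Extracting a sign.} Because that first summand is $\le 0$, hypothesis \eqref{eq-lowerbound} forces $-v^tB^{-1}v\ge 1/s>0$, and since $v\neq0$ makes $v^tB^{-2}v>0$ the first summand is in fact strictly negative, so
$$1+s\,v^tB^{-1}v<0 .$$
I would then use this to count the eigenvalues of $A_{l+1}=A_l+svv^t$. As $A_{l+1}\succeq A_l$, Weyl monotonicity gives at least $l$ eigenvalues above $b_{l+1}$, while $\mathrm{rank}(A_{l+1})\le l+1$ leaves at most $l+1$ nonzero eigenvalues, hence at most $l+1$ above $b_{l+1}>0$; and the matrix determinant lemma gives $\det(A_{l+1}-b_{l+1}\cdot Id)=\det B\cdot(1+s\,v^tB^{-1}v)$, whose sign is $(-1)^{n-l}\cdot(-1)=(-1)^{n-(l+1)}$, incompatible with exactly $l$ positive eigenvalues. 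So $A_{l+1}$ has exactly $l+1$ eigenvalues above $b_{l+1}$, and these are all its nonzero eigenvalues. The same sign inequality is also what permits the Sherman--Morrison step below.

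\textbf{The potential inequality.} Writing $D:=-(1+s\,v^tB^{-1}v)>0$, Sherman--Morrison applied to $B+svv^t$, with the trace taken against $M=UU^t$, gives
$$\phi(A_{l+1},b_{l+1})=\phi(A_l,b_{l+1})+\frac{s\,\Vert U^tB^{-1}v\Vert_2^{2}}{D},$$
where I used $v^tB^{-1}MB^{-1}v=\Vert U^tB^{-1}v\Vert_2^{2}$. The one genuinely analytic ingredient is the operator-norm bound $\Vert U^tB^{-1}v\Vert_2^{2}\le\Vert U\Vert^{2}\,\Vert B^{-1}v\Vert_2^{2}=\Vert U\Vert^{2}\,v^tB^{-2}v$. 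Substituting it, the desired $\phi(A_{l+1},b_{l+1})\le\phi(A_l,b_l)=\phi(A_l,b_{l+1})+P$ will follow as soon as $s\,\Vert U\Vert^{2}\,v^tB^{-2}v\le PD$; and, $s,P,D$ being positive, this last inequality is exactly the rearrangement
$$-\frac{\Vert U\Vert^{2}\,v^tB^{-2}v}{P}-v^tB^{-1}v\ \ge\ \frac1s$$
of hypothesis \eqref{eq-lowerbound}. That closes the argument.

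\textbf{The delicate point.} The computation is essentially mechanical; the thing I would be careful about is establishing $1+s\,v^tB^{-1}v<0$ before anything else, since that sign is simultaneously what legitimises the Sherman--Morrison step, what makes the final inequality come out in the correct direction once $D$ is cleared, and what tips the eigenvalue count from $l$ up to $l+1$.
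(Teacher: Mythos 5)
Your proof is correct and follows exactly the route the paper has in mind: the paper does not spell out an argument for this lemma but refers to the Batson--Spielman--Srivastava lower-barrier technique from \cite{MR2956233} and \cite{P.Y} and explicitly states that "the proofs of these two lemmas make use of the Sherman--Morrison formula." Your write-up does precisely this — establishing $1+s\,v^tB^{-1}v<0$, using the matrix determinant lemma for the eigenvalue count and Sherman--Morrison plus the bound $\|U^tB^{-1}v\|_2^2\le\|U\|^2\,v^tB^{-2}v$ for the potential decrease — so it matches the intended argument rather than offering an alternative.
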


\vskip 0.3cm

While in \cite{MR2956233} the restricted invertibility problem was unweighted, here we need a weighted 
version of it and this is why the previous lemma is stated like this. 
Now, in order to construct $A_{l+1}$ which has all its eigenvalues smaller than $u_{l+1}$ and 
such that  $\psi(A_{l+1},u_{l+1})\leqslant \psi(A_l,u_l)$ we may look at the algorithm used for the sparsification 
theorem \cite{MR2780071} or the Kashin-Tzafriri column selection theorem (see Theorem~4.2 \cite{P.Y}):

\begin{lem}
If $A_l\prec u_l.Id$ and if for some vector $v$ and some positive scalar $s$ we have
\begin{equation}\label{eq-upperbound}
F_l(v):=\frac{v^t\left(u_{l+1}\cdot Id-A_l\right)^{-2}v}{\psi(A_l,u_l)-\psi(A_l,u_{l+1})}\cdot \Vert U\Vert^2 +v^t\left(u_{l+1}\cdot Id-A_l\right)^{-1}v\leqslant \frac{1}{s}.
\end{equation}
Then denoting $A_{l+1}=A_l+svv^t$, we have $A_{l+1}\prec u_{l+1}.Id$ and $\psi(A_{l+1},u_{l+1})\leqslant \psi(A_l,u_l)$.
\end{lem}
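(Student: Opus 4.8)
The plan is to argue exactly as in the lemma for the lower bound, but with $\psi$ in place of $\phi$: one expands the rank-one update $A_{l+1}=A_l+svv^t$ via the Sherman--Morrison formula, estimates the resulting correction term using $UU^t\preceq\Vert U\Vert^2\cdot Id$, and observes that the quantity $F_l(v)$ has been defined precisely so that the hypothesis $F_l(v)\leqslant 1/s$ is equivalent to the inequality one needs. We may assume $v\neq 0$ and $U\neq 0$, the remaining cases being trivial.

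First I would collect the positivity facts. Since $A_l\prec u_l\cdot Id\prec u_{l+1}\cdot Id$, the matrix $M:=u_{l+1}\cdot Id-A_l$ is positive definite, so $M^{-1}$ and $M^{-2}$ are positive definite. As $\psi(A_l,\cdot)$ is strictly decreasing in its second argument (enlarging $u$ shrinks $(u\cdot Id-A_l)^{-1}$, hence shrinks $U^t(u\cdot Id-A_l)^{-1}U$, hence its trace) and $u_{l+1}>u_l$, the denominator $\psi(A_l,u_l)-\psi(A_l,u_{l+1})$ occurring in $F_l$ is strictly positive; consequently the first summand of $F_l(v)$, namely $\Vert U\Vert^2\,v^tM^{-2}v\,/\,(\psi(A_l,u_l)-\psi(A_l,u_{l+1}))$, is strictly positive. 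Therefore the hypothesis $F_l(v)\leqslant 1/s$ already forces $v^tM^{-1}v<1/s$, i.e. $s\,v^tM^{-1}v<1$. By the standard positivity criterion for a rank-one perturbation ($M-svv^t=M^{1/2}(Id-s(M^{-1/2}v)(M^{-1/2}v)^t)M^{1/2}$ is positive definite iff $s\Vert M^{-1/2}v\Vert^2<1$), this gives $M-svv^t\succ 0$, which is precisely $A_{l+1}\prec u_{l+1}\cdot Id$, the first assertion.

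Next I would compute $\psi(A_{l+1},u_{l+1})={\rm Tr}\big(U^t(M-svv^t)^{-1}U\big)$. By Sherman--Morrison,
$$(M-svv^t)^{-1}=M^{-1}+\frac{s\,M^{-1}vv^tM^{-1}}{1-s\,v^tM^{-1}v},$$
so, multiplying on the left by $U^t$, on the right by $U$, taking the trace, and using cyclicity of the trace on the correction term,
$$\psi(A_{l+1},u_{l+1})=\psi(A_l,u_{l+1})+\frac{s\,v^tM^{-1}UU^tM^{-1}v}{1-s\,v^tM^{-1}v}.$$
Bounding $UU^t\preceq\Vert U\Vert^2\cdot Id$ in the numerator (legitimate since $M^{-1}v$ is a fixed vector), and keeping $1-s\,v^tM^{-1}v>0$ in mind, it suffices to check
$$\frac{s\,\Vert U\Vert^2\,v^tM^{-2}v}{1-s\,v^tM^{-1}v}\leqslant \psi(A_l,u_l)-\psi(A_l,u_{l+1}).$$
Writing $p=\psi(A_l,u_l)-\psi(A_l,u_{l+1})>0$ and clearing the positive denominator, this is $s\Vert U\Vert^2v^tM^{-2}v+s\,p\,v^tM^{-1}v\leqslant p$; dividing by $s\,p>0$ it becomes $\Vert U\Vert^2v^tM^{-2}v/p+v^tM^{-1}v\leqslant 1/s$, which is exactly $F_l(v)\leqslant 1/s$. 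Hence $\psi(A_{l+1},u_{l+1})\leqslant\psi(A_l,u_{l+1})+p=\psi(A_l,u_l)$, the second assertion.

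I do not anticipate a genuine obstacle: this is the mirror image of the restricted-invertibility step, and $F_l$ is reverse-engineered so that the whole chain of inequalities above is actually an equivalence once the single estimate $UU^t\preceq\Vert U\Vert^2\cdot Id$ is inserted. The only points needing care are the direction of monotonicity of $\psi$ in $u$ (so that the denominator of $F_l$ is positive, not negative) and the strict inequality $s\,v^tM^{-1}v<1$, which is what makes both the Sherman--Morrison identity and the conclusion $A_{l+1}\prec u_{l+1}\cdot Id$ valid.
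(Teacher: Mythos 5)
Your proof is correct and follows exactly the route the paper indicates: the paper omits the details and simply notes that the lemma is proved via the Sherman--Morrison formula (as in Batson--Spielman--Srivastava and the author's earlier work), which is precisely your argument, including the observation that the strict positivity of the first term in $F_l(v)$ forces $s\,v^t(u_{l+1}\cdot Id-A_l)^{-1}v<1$ and hence $A_{l+1}\prec u_{l+1}\cdot Id$. No gaps; the trivial cases $v=0$, $U=0$ are handled appropriately.
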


The proofs of these two lemmas make use of the Sherman-Morrison formula. 

\vskip 0.3cm

For our problem, we will need to find a vector $v$ satisfying (\ref{eq-lowerbound}) and (\ref{eq-upperbound}) simultaneously. 
For that we need to merge these two conditions in one equation:
\begin{lem}\label{lem-lower-upper-bound}
If $A_l\prec u_l\cdot Id$ has $l$ eigenvalues greater than $b_l$ and if for some vector $v$ we have
\begin{equation}\label{eq-lower-upper-bound}
F_l(v)\leqslant G_l(v)
\end{equation}
Then taking any $s$ such that $F_l(v)\leqslant\frac{1}{s}\leqslant G_l(v)$, then $A_{l+1}=A_l+svv^t$ satisfies the $(l+1)$-requirement.
\end{lem}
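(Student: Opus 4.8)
The plan is to combine the two previous lemmas. Lemma (restricted invertibility step) shows that if a vector $v$ and weight $s$ satisfy $G_l(v)\geqslant 1/s$, then $A_{l+1}=A_l+svv^t$ inherits the first three conditions of the $(l+1)$-requirement; Lemma (sparsification step) shows that if $F_l(v)\leqslant 1/s$, then $A_{l+1}=A_l+svv^t$ inherits the first and the fourth conditions. So if a single $v$ satisfies the sandwich $F_l(v)\leqslant 1/s\leqslant G_l(v)$, \emph{the same} rank-one update $A_{l+1}=A_l+svv^t$ satisfies all four conditions at once, which is exactly the $(l+1)$-requirement. Thus the content of Lemma~\ref{lem-lower-upper-bound} is purely logical once one checks that the hypotheses of the two earlier lemmas are met, namely that under the stated assumption $A_l\prec u_l\cdot Id$ and $A_l$ has $l$ eigenvalues $>b_l$, which are precisely the standing hypotheses of Lemma~\ref{lem-lower-upper-bound} itself.

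Concretely, I would argue as follows. Assume $A_l\prec u_l\cdot Id$, that $A_l$ has $l$ eigenvalues greater than $b_l$, and that $F_l(v)\leqslant G_l(v)$. Since the interval $[F_l(v),G_l(v)]$ is nonempty, pick any $s$ with $F_l(v)\leqslant 1/s\leqslant G_l(v)$; note $s>0$, since $F_l(v)>0$ (it is a sum of two manifestly nonnegative quadratic forms in $v$, using $u_{l+1}\cdot Id-A_l\succ 0$ and $\psi(A_l,u_l)-\psi(A_l,u_{l+1})>0$). Now apply the first lemma with this $v$ and $s$: the inequality $G_l(v)\geqslant 1/s$ gives that $A_{l+1}=A_l+svv^t$ has $l+1$ nonzero eigenvalues all $>b_{l+1}$ and $\phi(A_{l+1},b_{l+1})\leqslant\phi(A_l,b_l)$. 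Apply the second lemma with the same $v$ and $s$: the inequality $F_l(v)\leqslant 1/s$ gives $A_{l+1}\prec u_{l+1}\cdot Id$ and $\psi(A_{l+1},u_{l+1})\leqslant\psi(A_l,u_l)$. Combining, $A_{l+1}$ satisfies $A_{l+1}\prec u_{l+1}\cdot Id$, has $l+1$ eigenvalues $>b_{l+1}$, and $\phi(A_{l+1},b_{l+1})\leqslant\phi(A_l,b_l)$, $\psi(A_{l+1},u_{l+1})\leqslant\psi(A_l,u_l)$; this is the $(l+1)$-requirement (the $\phi$ and $\psi$ bounds against the fixed thresholds $\phi,\psi$ will then follow by induction, feeding in $\phi(A_l,b_l)\leqslant\phi$ and $\psi(A_l,u_l)\leqslant\psi$ from the $l$-requirement).

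The only genuine subtlety — and the step I expect to need the most care — is the compatibility of the two weight conditions: one must make sure the admissible intervals do not force $s$ into contradictory ranges, i.e. that $F_l(v)\leqslant 1/s$ and $G_l(v)\geqslant 1/s$ can hold with a common positive $s$. This is where the hypothesis $F_l(v)\leqslant G_l(v)$ enters, and it is precisely why the substance of the argument is pushed into the later existence result: one still has to exhibit, for some column $v$ of $\widetilde U$, that $F_l(v)\leqslant G_l(v)$ actually holds — this in turn will rely on an averaging argument over the columns together with the choices of $\delta$, $\Delta$, $b_0$, $u_0$ fixed above. For the present lemma, however, that existence is not needed: the statement is conditional on $F_l(v)\leqslant G_l(v)$, so the proof is the short two-lemma combination sketched above, and no further estimate is required.
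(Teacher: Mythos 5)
Your proposal is correct and is exactly the paper's (implicit) argument: Lemma~\ref{lem-lower-upper-bound} is just the conjunction of the two preceding lemmas applied to the same pair $(v,s)$, with $s>0$ guaranteed by $1/s\geqslant F_l(v)>0$, and with the comparison against the fixed thresholds $\phi,\psi$ supplied by the $l$-requirement of $A_l$ in the iteration, as you note. No gap.
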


\begin{Rq}\label{rq-relation on kernel}
Since $A_{l+1}$ has $l+1$ nonzero eigenvalues while $A_l$ had only $l$ nonzero eigenvalues, 
then the vector $v$ chosen has non-zero projection onto the kernel of $A_l$. Therefore one can 
see that ${\rm Ker}(A_{l+1})\subset {\rm Ker}(A_l)$ and ${\rm Dim\left[Ker(A_{l+1})\right]}={\rm Dim\left[ Ker(A_{l})\right]}-1$.
\end{Rq}

\begin{prop}\label{prop-iteration}
Let $A_l$ satisfy the $l$-requirement. If $b_0$ and $u_0$ satisfy
\begin{equation}\label{condition-u0-b0}
b_0\leqslant \frac{\varepsilon u_0}{2-\varepsilon}
\end{equation}
then there exists $i\leqslant m$ and a positive number $s_i$ 
such that $A_{l+1}=A_l+s_i\left(\widetilde{U}e_i\right)\cdot\left(\widetilde{U}e_i\right)^t$ 
satisfies the $(l+1)$-requirement.
\end{prop}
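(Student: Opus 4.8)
The plan is to show that the averaged versions of the quantities $G_l$ and $F_l$, taken over the columns $\widetilde{U}e_i$ (weighted by the squared norms of the original columns $Ue_i$), satisfy the inequality "average of $F_l$ $\leqslant$ average of $G_l$", so that by an averaging argument there must exist an index $i$ for which $F_l(\widetilde{U}e_i)\leqslant G_l(\widetilde{U}e_i)$; then Lemma~\ref{lem-lower-upper-bound} produces the desired $s_i$. The natural weights to use are $\|Ue_i\|_2^2$, since $\sum_i \|Ue_i\|_2^2\,(\widetilde{U}e_i)(\widetilde{U}e_i)^t = \sum_i (Ue_i)(Ue_i)^t = UU^t$, and $\sum_i \|Ue_i\|_2^2 = \|U\|_{\rm HS}^2$. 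So I would define $\overline{G_l} := \frac{1}{\|U\|_{\rm HS}^2}\sum_{i\leqslant m}\|Ue_i\|_2^2\, G_l(\widetilde{U}e_i)$ and similarly $\overline{F_l}$, and reduce the proposition to proving $\overline{F_l}\leqslant \overline{G_l}$.

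The key computation is to evaluate these two averages using the trace identities. Writing $M_\delta := (A_l - b_{l+1}\cdot Id)^{-1}$ (recall $b_{l+1}=b_l-\delta$), the numerator term $\sum_i \|Ue_i\|_2^2\, (\widetilde{U}e_i)^t M_\delta^2 (\widetilde{U}e_i) = {\rm Tr}(U^t M_\delta^2 U)$, which is exactly $-\frac{d}{db}\phi(A_l,b)\big|_{b=b_{l+1}}$; and $\sum_i \|Ue_i\|_2^2\,(\widetilde{U}e_i)^t M_\delta(\widetilde{U}e_i) = {\rm Tr}(U^t M_\delta U) = \phi(A_l,b_{l+1})$. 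Hence
$$
\overline{G_l} \;=\; \frac{\|U\|^2}{\|U\|_{\rm HS}^2}\cdot\frac{{\rm Tr}(U^t M_\delta^2 U)}{\phi(A_l,b_l)-\phi(A_l,b_{l+1})}\;-\;\frac{\phi(A_l,b_{l+1})}{\|U\|_{\rm HS}^2}.
$$
A convexity (mean value) estimate gives $\phi(A_l,b_l)-\phi(A_l,b_{l+1}) \geqslant \delta\cdot {\rm Tr}(U^t M_\delta^2 U)$ — actually the cleanest route is the standard one-sided bound used in Spielman--Srivastava: using $\phi(A_l,b_l)\leqslant\phi$ and the explicit choice of $\delta$, one shows $\overline{G_l}\geqslant \frac{1}{\delta}\cdot\frac{\|U\|^2}{\|U\|_{\rm HS}^2} - \frac{\phi}{\|U\|_{\rm HS}^2} = \frac{1}{(1-\varepsilon)b_0} - \frac{\phi}{\|U\|_{\rm HS}^2} = \frac{1}{(1-\varepsilon)b_0}+\frac{1}{b_0} = \frac{2-\varepsilon}{(1-\varepsilon)b_0}$, using $\phi = -\|U\|_{\rm HS}^2/b_0$. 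Symmetrically, with $N_\Delta := (u_{l+1}\cdot Id - A_l)^{-1}$, one has $\overline{F_l} = \frac{\|U\|^2}{\|U\|_{\rm HS}^2}\cdot\frac{{\rm Tr}(U^t N_\Delta^2 U)}{\psi(A_l,u_l)-\psi(A_l,u_{l+1})} + \frac{\psi(A_l,u_{l+1})}{\|U\|_{\rm HS}^2}$, and here the inequality goes the other way: using $\psi(A_l,u_l)\leqslant\psi$, the monotonicity $\psi(A_l,u_{l+1})\leqslant\psi(A_l,u_l)\leqslant\psi$, and $\psi(A_l,u_l)-\psi(A_l,u_{l+1})\geqslant \Delta\cdot{\rm Tr}(U^t N_\Delta^2 U)$, one gets $\overline{F_l}\leqslant \frac{1}{\Delta}\cdot\frac{\|U\|^2}{\|U\|_{\rm HS}^2} + \frac{\psi}{\|U\|_{\rm HS}^2} = \frac{1}{(1-\varepsilon)u_0} + \frac{1}{u_0} = \frac{2-\varepsilon}{(1-\varepsilon)u_0}$.

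Combining the two estimates, $\overline{F_l}\leqslant \frac{2-\varepsilon}{(1-\varepsilon)u_0}$ and $\overline{G_l}\geqslant \frac{2-\varepsilon}{(1-\varepsilon)b_0}$, so $\overline{F_l}\leqslant\overline{G_l}$ holds as soon as $\frac{1}{u_0}\leqslant\frac{1}{b_0}$ — wait, that is automatic, so in fact the genuine content of the hypothesis \eqref{condition-u0-b0} enters in a more delicate place: one must check that the crude bound $\psi(A_l,u_{l+1})\leqslant\psi$ is not what is used, but rather that the quantity $\Delta$ is small enough that $u_{l+1}\cdot Id - A_l$ stays positive definite throughout, i.e. that $A_l\prec u_{l+1}\cdot Id$, and that the comparison of the two sides absorbs the loss coming from replacing $\psi(A_l,u_l)-\psi(A_l,u_{l+1})$ by something involving ${\rm Tr}(U^t N_\Delta^2 U)$. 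The condition $b_0\leqslant \frac{\varepsilon u_0}{2-\varepsilon}$, equivalently $\varepsilon b_0 = b_k \leqslant \frac{\varepsilon^2 u_0}{2-\varepsilon}$... more transparently $b_k/u_k = \varepsilon b_0/((2-\varepsilon)u_0) \leqslant \varepsilon^2/(2-\varepsilon)^2$, will be exactly what is needed to guarantee the chain of inequalities closes. \textbf{The main obstacle} I anticipate is precisely the bookkeeping around the second-order (squared-resolvent) terms: one must carefully sign-track the inequalities $\phi(A_l,b_l)-\phi(A_l,b_{l+1})\geqslant \delta\,{\rm Tr}(U^tM_\delta^2U)$ and its $\psi$-counterpart (which come from convexity of $t\mapsto (t-b)^{-1}$ and concavity considerations), make sure the denominators are positive (this is where $A_l$ must not be too close to $u_{l+1}\cdot Id$), and then verify that the resulting comparison between $\overline{F_l}$ and $\overline{G_l}$ is implied by \eqref{condition-u0-b0} with no slack lost — in Batson--Spielman--Srivastava-type arguments this last step is where the precise constants live and is the only genuinely non-routine part.
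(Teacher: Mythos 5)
Your overall strategy is the paper's: compare $\sum_j F_l(Ue_j)$ with $\sum_j G_l(Ue_j)$ (the weighting by $\Vert Ue_j\Vert_2^2$ is immaterial since $F_l,G_l$ are quadratic forms), pick an index by averaging, and invoke Lemma~\ref{lem-lower-upper-bound}. The $F_l$ side of your argument is essentially correct. The genuine gap is on the $G_l$ side: the inequality you lean on, $\phi(A_l,b_l)-\phi(A_l,b_{l+1})\geqslant \delta\,{\rm Tr}\bigl(U^t(A_l-b_{l+1}\cdot Id)^{-2}U\bigr)$, is false in general. Indeed $\phi(A_l,b_l)-\phi(A_l,b_{l+1})=\delta\,{\rm Tr}\bigl[U^t(A_l-b_l\cdot Id)^{-1}(A_l-b_{l+1}\cdot Id)^{-1}U\bigr]$, and the eigenvalue-wise comparison $(\lambda-b_l)^{-1}(\lambda-b_{l+1})^{-1}\geqslant(\lambda-b_{l+1})^{-2}$ holds only for $\lambda>b_l$; on the kernel of $A_l$ (nontrivial, since $A_l$ has rank at most $l<k$) one has $\lambda=0$ and $\tfrac{1}{b_lb_{l+1}}<\tfrac{1}{b_{l+1}^2}$, so the inequality reverses. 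Already at $l=0$, where $A_0=0$, your claim reads $\delta\Vert U\Vert_{\rm HS}^2/(b_0b_1)\geqslant\delta\Vert U\Vert_{\rm HS}^2/b_1^2$, which is false. This is exactly why the paper splits all traces along $P_l$ (image) and $Q_l$ (kernel) and maintains, inductively along the construction, the invariant $b_l\leqslant\delta\Vert Q_lU\Vert_{\rm HS}^2/\Vert U\Vert^2$ (using that the kernel loses exactly one dimension per step, Remark~\ref{rq-relation on kernel}); that invariant makes the kernel part $\delta\Vert Q_lU\Vert_{\rm HS}^2/(b_lb_{l+1})$ of the increment at least $\Vert U\Vert^2/b_{l+1}$, which is what absorbs the kernel contribution $\Vert Q_lU\Vert_{\rm HS}^2/b_{l+1}^2$ of the squared resolvent and yields $\sum_j G_l(Ue_j)\geqslant-\Vert U\Vert^2/\delta-\phi$. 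Your proposal contains none of this machinery, and without it the $G_l$ lower bound does not follow.

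There is also a sign slip that hides where hypothesis (\ref{condition-u0-b0}) enters: $G_l$ carries a minus sign in front of the squared-resolvent term, so your displayed formula for $\overline{G_l}$ is wrong, and your numerical bounds are off as well (with the paper's choices, $\Vert U\Vert^2/(\delta\Vert U\Vert_{\rm HS}^2)=(1-\varepsilon)/b_0$, not $1/((1-\varepsilon)b_0)$, and similarly on the $\Delta$ side). With correct signs one gets $\sum_j F_l(Ue_j)\leqslant\Vert U\Vert^2/\Delta+\psi=(2-\varepsilon)\Vert U\Vert_{\rm HS}^2/u_0$ and $\sum_j G_l(Ue_j)\geqslant-\Vert U\Vert^2/\delta-\phi=\varepsilon\Vert U\Vert_{\rm HS}^2/b_0$, and comparing these two is precisely condition (\ref{condition-u0-b0}); it is not automatic, and it does not enter through the positivity of $u_{l+1}\cdot Id-A_l$ (which is free, since $A_l\prec u_l\cdot Id\prec u_{l+1}\cdot Id$), as you conjectured at the end. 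Your own observation that the condition seemed vacuous was the symptom of these two errors, but the diagnosis points to the kernel bookkeeping on the $\phi$-side, not to the $\psi$-side.
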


\begin{proof}
According to Lemma~\ref{lem-lower-upper-bound}, it is sufficient to find $i\leqslant m$ 
such that $F_l\left( \widetilde{U}e_i\right)\leqslant G_l\left(\widetilde{U}e_i\right)$ 
and then take $s_j$ such that
\begin{equation}\label{condition-weights}
F_l\left( \widetilde{U}e_i\right)\leqslant \frac{1}{s_j}\leqslant G_l\left(\widetilde{U}e_i\right)
\end{equation}
 Since $F_l$ and $G_l$ are quadratic 
forms, we are free to prove any weighted version of the previous inequality. For instance, it is equivalent to find $i\leqslant m$ such that $F_l\left(Ue_i\right)\leqslant G_l\left(Ue_i\right)$. 
For that, it is sufficient to prove
\begin{equation}\label{eq-sum-lower-upper-bound}
\sum_{j\leqslant m} F_l\left(Ue_j\right)\leqslant \sum_{j\leqslant m} G_l\left(Ue_j\right)
\end{equation}

Before estimating $\sum_{j\leqslant m} F_l\left(Ue_j\right)$, let us note that
\begin{align*}
\psi(A_l,u_l)-\psi(A_l,u_{l+1})&={\rm Tr}\left[U^t\left(u_{l}\cdot Id-A_l\right)^{-1}U\right]
-{\rm Tr}\left[U^t\left(u_{l+1}\cdot Id-A_l\right)^{-1}U\right]\\
&= \Delta {\rm Tr}\left[U^t\left(u_{l}\cdot Id-A_l\right)^{-1}\left(u_{l+1}\cdot Id-A_l\right)^{-1}U\right]\\
&\geqslant \Delta  {\rm Tr}\left[U^t\left(u_{l+1}\cdot Id-A_l\right)^{-2}U\right]
\end{align*}
Replacing this in $F_l$ we get
\begin{align*}
\sum_{j\leqslant m} F_l\left(Ue_j\right)&= \frac{\sum_{j\leqslant m}
e_j^tU^t\left(u_{l+1}\cdot Id-A_l\right)^{-2}Ue_j}{\psi(A_l,u_l)-\psi(A_l,u_{l+1})}\cdot \Vert U\Vert^2 
+\sum_{j\leqslant m}e_j^tU^t\left(u_{l+1}\cdot Id-A_l\right)^{-1}Ue_j\\
&= \frac{{\rm Tr}\left[U^t\left(u_{l+1}\cdot Id-A_l\right)^{-2}U\right]}{\psi(A_l,u_l)-\psi(A_l,u_{l+1})}\cdot \Vert U\Vert^2
+{\rm Tr}\left[U^t\left(u_{l+1}\cdot Id-A_l\right)^{-1}U\right]\\
&\leqslant \frac{\Vert U\Vert^2}{\Delta} +\psi
\end{align*}

Now we may estimate $\sum_{j\leqslant m}G_l\left(Ue_j\right)$ in a similar way to what is done in \cite{MR2956233} and \cite{P.Y}. 
We denote by $P_l$ the orthogonal projection 
onto the image of $A_l$ and $Q_l$ the orthogonal projection onto the kernel of $A_l$. Note that $\forall l\leqslant k$ we have 
the following
\begin{equation}\label{eq-relation on kernel}
b_l\leqslant \delta\frac{\Vert Q_lU\Vert_{\rm HS}^2}{\Vert U\Vert^2} 
\end{equation}
Since $Q_0=Id$, this fact is true at the beginning by our choice of $\delta$. Taking in account 
Remark~\ref{rq-relation on kernel}, at each step 
$\Vert Q_lU\Vert_{\rm HS}^2$ decreases by at most $\Vert U\Vert^2$ so that the 
right hand side of (\ref{eq-relation on kernel}) decreases by at most $\delta$. Since at 
each step we replace $b_l$ by $b_{l+1}$, (\ref{eq-relation on kernel}) remains true.\\
Since $Id=P_l+Q_l$ and $Q_l A_l = 0$, keeping in mind that $P_l, Q_l, A_l$ commute we can write
\begin{align*}
{\rm Tr}\left[U^t\left(A_l-b_{l+1}\cdot Id\right)^{-2}U\right]&=  
 {\rm Tr}\left[U^tP_l\left(A_l-b_{l+1}\cdot Id\right)^{-2}P_lU\right]+ 
 {\rm Tr}\left[U^tQ_l\left(A_l-b_{l+1}\cdot Id\right)^{-2}Q_lU\right]\\ 
&={\rm Tr}\left[U^tP_l\left(A_l-b_{l+1}\cdot Id\right)^{-2}P_lU\right]
  +\frac{\Vert Q_lU\Vert_{\rm HS}^2}{b_{l+1}^2}
  \end{align*}
Doing the same decomposition for $\phi_l(A_l)$ we get
\begin{align*}
\phi(A_l,b_l)&=  {\rm Tr}\left[U^tP_l\left(A_l-b_{l}\cdot Id\right)^{-1}P_lU\right]+ 
 {\rm Tr}\left[U^tQ_l\left(A_l-b_{l}\cdot Id\right)^{-1}Q_lU\right]\\
&={\rm Tr}\left[U^tP_l\left(A_l-b_{l}\cdot Id\right)^{-1}P_lU\right]- 
 \frac{\Vert Q_lU\Vert_{\rm HS}^2}{b_{l}}\\
&:=\phi^P(A_l,b_l) +\phi^Q(A_l,b_l)
\end{align*}
Denote $\Lambda_l = \phi(A_l,b_l) -\phi(A_l,b_{l+1})$ and $\Lambda_l^P$,  $\Lambda_l^Q$
 the corresponding decompositions onto the image part and the kernel part as above. As we did before, we have 
 $\Lambda_l =\Lambda_l^P+\Lambda_l^Q$ and
\begin{align*}
\Lambda_l^P&={\rm Tr}\left[U^tP_l\left(A_l-b_{l}\cdot Id\right)^{-1}P_lU\right]
 -{\rm Tr}\left[U^tP_l\left(A_l-b_{l+1}\cdot Id\right)^{-1}P_lU\right]\\
&= \delta {\rm Tr}\left[U^tP_l\left(A_l-b_{l}\cdot Id\right)^{-1}\left(A_l-b_{l+1}\cdot Id\right)^{-1}P_lU\right]\\
&\geqslant \delta {\rm Tr}\left[U^tP_l\left(A_l-b_{l+1}\cdot Id\right)^{-2}P_lU\right]\\
\end{align*}
Using (\ref{eq-relation on kernel}) we have
$$
\Lambda_l^Q = -\frac{\Vert Q_lU\Vert_{\rm HS}^2}{b_{l}}+\frac{\Vert Q_lU\Vert_{\rm HS}^2}{b_{l+1}}
=\frac{\delta \Vert Q_lU\Vert_{\rm HS}^2}{b_{l}b_{l+1}}\geqslant \frac{\Vert U\Vert^2}{b_{l+1}}
$$

\vskip 0.3cm

Looking at the previous information, we can write
\begin{align*}
\sum_{j\leqslant m} G_l\left( Ue_j\right)&=-\frac{
{\rm Tr}\left[U^t\left(A_l-b_{l+1}\cdot Id\right)^{-2}U\right]}{\Lambda_l}\cdot \Vert U\Vert^2 
-{\rm Tr}\left[U^t\left(A_l-b_{l+1}\cdot Id\right)^{-1}U\right]\\ 
&=-\frac{{\rm Tr}\left[U^tP_l\left(A_l-b_{l+1}\cdot Id\right)^{-2}P_lU\right]+
\frac{\Vert Q_lU\Vert_{\rm HS}^2}{b_{l+1}^2}}{\Lambda_l}\cdot \Vert U\Vert^2 
-\phi(A_l,b_{l+1})\\ 
&\geqslant -\frac{\frac{\Lambda_l^P}{\delta}+
\frac{\delta\Vert Q_lU\Vert_{\rm HS}^2}{b_{l}b_{l+1}}\left[ \frac{b_l}{\delta b_{l+1}}\right] }{\Lambda_l} 
\cdot \Vert U\Vert^2+\Lambda_l -\phi(A_l,b_l)\\
&\geqslant -\frac{\frac{\Lambda_l^P}{\delta}+
\Lambda_l^Q\left[ \frac{1}{\delta}+\frac{1}{ b_{l+1}}\right] }{\Lambda_l}
 \cdot \Vert U\Vert^2+\Lambda_l^Q -\phi\\
&\geqslant -\frac{\Vert U\Vert^2}{\delta}-\frac{\Vert U\Vert^2}{b_{l+1}}+\Lambda_l^Q -\phi\\
&\geqslant -\frac{\Vert U\Vert^2}{\delta}-\phi 
\end{align*}
Until now we have proven that 
$$
\sum_{j\leqslant m} G_l\left( Ue_j\right)\geqslant -\frac{\Vert U\Vert^2}{\delta}-\phi \quad 
\text{ and } \quad \sum_{j\leqslant m} F_l\left(Ue_j\right)\leqslant \frac{\Vert U\Vert^2}{\Delta} +\psi
$$
So in order to prove (\ref{eq-sum-lower-upper-bound}), it will be sufficient to verify
\begin{equation}\label{eq1-final condition}
\frac{\Vert U\Vert^2}{\Delta} +\psi\leqslant -\frac{\Vert U\Vert^2}{\delta}-\phi
\end{equation}

Replacing in (\ref{eq1-final condition}) the values of the corresponding parameters as chosen at the beginning, 
it is sufficient to prove
$$
\frac{(2-\varepsilon)\Vert U\Vert_{\rm HS}^2}{u_0}\leqslant \frac{\varepsilon \Vert U\Vert_{\rm HS}^2}{b_0}
$$
which is after rearrangement condition (\ref{condition-u0-b0}).
\end{proof}

Keeping in mind that $k = (1-\varepsilon)^2\frac{\Vert U\Vert_{\rm HS}^2}{\Vert U\Vert^2}$, 
we are ready to finish the construction of $A_k$. We may iterate Proposition~\ref{prop-iteration} starting with $A_0= 0$. 
Of course, $A_0$ satisfies the $0$-requirement so by the proposition we can find a column vector and 
a corresponding scalar to form $A_1$ satisfying the $1$-requirement. Once again we use the proposition 
to construct $A_2$ satisfying the $2$-requirement. We can continue with this procedure as long as the 
corresponding lower bound $b_l$ is positve (which is the case for $b_k$). So after $k$ steps we have constructed 
$A_k=\sum_{j\in\sigma}s_j\left(\widetilde{U}e_i\right)\cdot\left(\widetilde{U}e_i\right)^t$ satisfying the 
$k$-requirement which means that 
$$
A_k\prec u_k\cdot Id= (2-\varepsilon)u_0\cdot Id \text{ \ and \ $A_k$ has $k$ eigenvalues 
bigger than $b_k= \varepsilon b_0$.}
$$

Now it remains to estimate the weights $s_j$ chosen. In \cite{MR2780071}, one cannot get a lower bound for the weights. 
In our setting, the number of vectors chosen is always less than the dimension and we are able to estimate the weights by a trivial calculation 
using the normalization of the chosen vectors. 

\begin{lem}
For any $l\leqslant k$ and any unit vector $v$ we have
$$
G_l(v)\leqslant \frac{1}{\varepsilon b_0} \quad \text{ and } \quad F_l(v)\geqslant \frac{1}{(2-\varepsilon)u_0}
$$
\end{lem}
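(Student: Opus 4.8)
The plan is to bound $G_l(v)$ and $F_l(v)$ directly from their defining formulas \eqref{eq-lowerbound} and \eqref{eq-upperbound}, using only the fact that $v$ is a unit vector together with the spectral constraints $b_l < \lambda_{\min}(A_l) \leqslant \lambda_{\max}(A_l) < u_l$ coming from the $l$-requirement. Since both $G_l$ and $F_l$ involve a quotient of a quadratic form by a difference of $\psi$- or $\phi$-values plus a single quadratic term, the idea is to throw away the favorable quotient term (it has the right sign) and keep only the quadratic term $v^t(A_l - b_{l+1}\cdot Id)^{-1}v$, respectively $v^t(u_{l+1}\cdot Id - A_l)^{-1}v$, which we estimate by operator-norm bounds on the resolvents.

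Concretely, for $G_l(v)$: the first term $-\dfrac{v^t(A_l-b_{l+1}\cdot Id)^{-2}v}{\phi(A_l,b_l)-\phi(A_l,b_{l+1})}\cdot\Vert U\Vert^2$ is a negative quantity divided by $\Lambda_l = \phi(A_l,b_l)-\phi(A_l,b_{l+1})$; since $A_l$ has $l$ positive eigenvalues, $\phi(A_l,b_{l+1})\geqslant\phi(A_l,b_l)$ is not what we want — rather one checks $\Lambda_l \geqslant 0$ because $\phi(\cdot,b)$ is decreasing in $b$ and $b_{l+1}=b_l-\delta<b_l$, so in fact $\Lambda_l>0$ and the whole first term is $\leqslant 0$. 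Hence $G_l(v) \leqslant -v^t(A_l-b_{l+1}\cdot Id)^{-1}v$. Now $A_l - b_{l+1}\cdot Id$ has eigenvalue $-b_{l+1}$ on $\mathrm{Ker}(A_l)$ and eigenvalues $\geqslant b_l - b_{l+1} = \delta > 0$ (actually $> b_l - b_{l+1}$, but more usefully its positive eigenvalues are $> b_{l+1}$ only if... ) on the range — the cleanest bound is that $-v^t(A_l-b_{l+1}\cdot Id)^{-1}v$ is maximized by putting all of $v$ in the kernel, giving $\leqslant \dfrac{1}{b_{l+1}}$. But we need $\dfrac{1}{\varepsilon b_0} = \dfrac{1}{b_k}$, and $b_{l+1}\geqslant b_k = \varepsilon b_0$ for all $l\leqslant k-1$, so $\dfrac{1}{b_{l+1}}\leqslant\dfrac{1}{b_k}=\dfrac{1}{\varepsilon b_0}$, as desired. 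Symmetrically, for $F_l(v)$: the quotient term $\dfrac{v^t(u_{l+1}\cdot Id-A_l)^{-2}v}{\psi(A_l,u_l)-\psi(A_l,u_{l+1})}\cdot\Vert U\Vert^2$ is nonnegative, so $F_l(v)\geqslant v^t(u_{l+1}\cdot Id-A_l)^{-1}v$; since $A_l \prec u_l\cdot Id \prec u_{l+1}\cdot Id$, the matrix $u_{l+1}\cdot Id - A_l$ is positive with all eigenvalues $< u_{l+1} - 0 = u_{l+1}$ (using $A_l\succeq 0$), hence $v^t(u_{l+1}\cdot Id-A_l)^{-1}v \geqslant \dfrac{1}{u_{l+1}}\geqslant\dfrac{1}{u_k}=\dfrac{1}{(2-\varepsilon)u_0}$, again because $u_{l+1}\leqslant u_k$ for $l\leqslant k-1$.

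The main thing to be careful about is the sign and positivity bookkeeping: one must verify $\Lambda_l > 0$ and $\psi(A_l,u_l)-\psi(A_l,u_{l+1}) > 0$ so that dropping those terms goes in the correct direction, and one must locate the spectrum of the shifted matrices $A_l - b_{l+1}\cdot Id$ and $u_{l+1}\cdot Id - A_l$ precisely enough to read off the resolvent bounds — in particular that $A_l$ is positive semidefinite with $\lambda_{\max}(A_l) < u_l$, so $0 \prec u_{l+1}\cdot Id - A_l \prec u_{l+1}\cdot Id$, and that on $\mathrm{Ker}(A_l)$ the matrix $A_l - b_{l+1}\cdot Id$ acts as $-b_{l+1}\cdot Id$ with $b_{l+1} > 0$. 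All of these follow from the $l$-requirement and from $b_k = \varepsilon b_0 > 0$. The inequalities $b_{l+1}\geqslant b_k$ and $u_{l+1}\leqslant u_k$ are immediate from the monotone update rules $b_{l+1} = b_l - \delta$ and $u_{l+1} = u_l + \Delta$ and the fact that there are at most $k$ steps. This is the step I expect to require the most care, though it is still only a routine spectral estimate rather than a genuine obstacle.
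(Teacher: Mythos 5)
Your argument is essentially the paper's: drop the fractional term of $G_l$ (resp.\ of $F_l$), which has the favorable sign, then bound the remaining quadratic form $-v^t\left(A_l - b_{l+1}\cdot Id\right)^{-1}v$ (resp.\ $v^t\left(u_{l+1}\cdot Id - A_l\right)^{-1}v$) by its extreme eigenvalue using $A_l\succeq 0$ and $A_l\prec u_l\cdot Id$, and finish with the monotone drifts $b_{l+1}\geqslant b_k=\varepsilon b_0$ and $u_{l+1}\leqslant u_k=(2-\varepsilon)u_0$. The paper makes the split $Id=P_l+Q_l$ explicit and discards the range contribution because $v^tP_l\left(A_l-b_{l+1}\cdot Id\right)^{-1}P_lv\geqslant 0$; your ``maximized on the kernel'' observation is the same computation in different words.

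One slip worth flagging: you justify $\Lambda_l\geqslant 0$ by claiming that $\phi(\cdot,b)$ is decreasing in $b$. In fact $\phi(A,b)={\rm Tr}\left(U^t\left(A-b\cdot Id\right)^{-1}U\right)$ is \emph{increasing} in $b$ on the relevant interval, since $\partial_b\,\phi(A,b)={\rm Tr}\left(U^t\left(A-b\cdot Id\right)^{-2}U\right)\geqslant 0$. It is this monotonicity, together with $b_{l+1}<b_l$, that gives $\Lambda_l=\phi(A_l,b_l)-\phi(A_l,b_{l+1})\geqslant 0$; if $\phi$ were decreasing as you wrote, the same reasoning would produce $\Lambda_l\leqslant 0$ and the first term of $G_l$ could not be dropped. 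Your final conclusion is correct; only the stated direction of monotonicity is reversed.
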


\begin{proof}
Write again $Id=P_l+Q_l$ and notice that \ $\frac{v^t\left(A_l-b_{l+1}\cdot Id\right)^{-2}v}{\phi(A_l,b_l)-\phi(A_l,b_{l+1})}$ \ and \ $v^tP_l\left(A_l-b_{l+1}\cdot Id\right)^{-1}P_lv$ are positive 
then we have
$$
G_l(v)\leqslant -v^t\left(A_l-b_{l+1}\cdot Id\right)^{-1}v
           \leqslant -v^tQ_l\left(A_l-b_{l+1}\cdot Id\right)^{-1}Q_lv
           \leqslant \frac{\Vert Q_lv\Vert_2^2}{b_{l+1}}
           \leqslant \frac{\Vert v\Vert_2^2}{b_k}
           \leqslant \frac{1}{\varepsilon b_0}
$$

Now since $\frac{v^t\left(u_{l+1}\cdot Id-A_l\right)^{-2}v}{\psi(A_l,u_l)-\psi(A_l,u_{l+1})}\geqslant 0$ then
$$
F_l(v)\geqslant v^t\left(u_{l+1}\cdot Id-A_l\right)^{-1}v\geqslant v^t\left(u_{k}\cdot Id\right)^{-1}v\geqslant  \frac{1}{(2-\varepsilon)u_0}
$$
\end{proof}

The weights $s_j$ that we have chosen satisfied (\ref{condition-weights}) and therefore by the previous lemma
$$
\forall i\leqslant k,\quad \varepsilon b_0 \leqslant s_j\leqslant (2-\varepsilon)u_0
$$

Back to our problem note that 
$$
\widetilde{U}_\sigma\widetilde{U}_\sigma^t = \sum_{j\in\sigma}\left(\widetilde{U}e_i\right)\cdot\left(\widetilde{U}e_i\right)^t
$$
and therefore 
$$
\frac{1}{(2-\varepsilon)u_0}A_k=\frac{1}{(2-\varepsilon)u_0}
\sum_{j\in\sigma}s_j\left(\widetilde{U}e_i\right)\cdot\left(\widetilde{U}e_i\right)^t\preceq
\widetilde{U}_\sigma\widetilde{U}_\sigma^t \preceq \frac{1}{\varepsilon b_0} 
\sum_{j\in\sigma}s_j\left(\widetilde{U}e_i\right)\cdot\left(\widetilde{U}e_i\right)^t 
=\frac{1}{\varepsilon b_0} A_k
$$

Transfering the properties of $A_k$, we deduce that 
$\widetilde{U}_\sigma\widetilde{U}_\sigma^t \preceq \frac{(2-\varepsilon) u_0}{\varepsilon b_0}Id$ 
and $\widetilde{U}_\sigma\widetilde{U}_\sigma^t$ has $k$ eigenvalues greater than $\frac{\varepsilon b_0}{(2-\varepsilon) u_0}$.

This means that
$$
\frac{\varepsilon b_0}{(2-\varepsilon) u_0} Id\preceq
\widetilde{U}_\sigma^t\widetilde{U}_\sigma \preceq \frac{(2-\varepsilon) u_0}{\varepsilon b_0}Id
$$
Taking $b_0=\frac{\varepsilon u_0}{2-\varepsilon}$ in order to satsify (\ref{condition-u0-b0}) 
we finish the proof of Theorem~\ref{big block-colomn selection-smallest and largest singular value}.

%%%%%%%%%%%%%%%%%%%%%%%%%%%%%%%%%%%%%%%%%%%%%%%%%%%%%%%%%%%%%%%%%%%%%%%%%%%%%%%%%%%%
%%%%%%%%%%%%%%%%%%%%%%%%%%%%%%%%%%%%%%%%%%%%%%%%%%%%%%%%%%%%%%%%%%%%%%%%%%%%%%%%%%%%
\section{Application to the local theory of Banach spaces}
%%%%%%%%%%%%%%%%%%%%%%%%%%%%%%%%%%%%%%%%%%%%%%%%%%%%%%%%%%%%%%%%%%%%%%%%%%%%%%%%%%%%
%%%%%%%%%%%%%%%%%%%%%%%%%%%%%%%%%%%%%%%%%%%%%%%%%%%%%%%%%%%%%%%%%%%%%%%%%%%%%%%%%%%%

As for the restricted invertibility principle where one can interpret 
the result as the invertibility of an operator on a decomposition of the identity, 
we will write the result in terms of a decomposition of the identity. 
This will be useful for applications to the local theory of Banach spaces 
since by John's theorem \cite{MR0030135} one can have 
a decomposition of the identity formed by contact points 
of the unit ball with its maximal volume ellipsoid.

\begin{prop}\label{prop-amelioration-vershynin}
Let $Id=\sum_{i\leqslant m} y_iy_i^t$ be a decomposition of the identity 
in $\mathbb{R}^n$ and $T$ be a linear 
operator on $l_2^n$.  For any $\varepsilon \in (0,1)$, there exists 
$\sigma\subset\{1,...,m\}$ such that
$$
\vert\sigma\vert\geqslant (1-\varepsilon)^2\frac{\Vert T\Vert_{\rm HS}^2}{\Vert T\Vert^2}
$$
and for all $(a_j)_{j\leqslant k}$,
$$
\frac{\varepsilon}{2-\varepsilon}\left(\sum_{j\leqslant k} a_j^2\right)^{\frac{1}{2}}
\leqslant \left\Vert\sum_{j\leqslant k} a_j\frac{Ty_j}{\Vert Ty_j\Vert_2}\right\Vert_2
\leqslant \frac{2-\varepsilon}{\varepsilon} \left(\sum_{j\leqslant k} a_j^2\right)^{\frac{1}{2}}.
$$
\end{prop}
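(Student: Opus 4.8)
The plan is to reduce Proposition~\ref{prop-amelioration-vershynin} directly to Theorem~\ref{big block-colomn selection-smallest and largest singular value} by encoding the decomposition of the identity as a matrix. First I would form the $n\times m$ matrix $Y$ whose $j$-th column is $y_j$, so that the hypothesis $Id=\sum_{j\leqslant m} y_jy_j^t$ reads exactly as $YY^t = Id$. Then I would apply Theorem~\ref{big block-colomn selection-smallest and largest singular value} to the matrix $U := TY$, whose columns are $Ue_j = Ty_j$. The normalized matrix $\widetilde U$ has columns $Ty_j/\Vert Ty_j\Vert_2$, so the conclusion of the theorem about $\widetilde U_\sigma$ is word-for-word the desired two-sided estimate on $\bigl\Vert\sum_{j\in\sigma} a_j\, Ty_j/\Vert Ty_j\Vert_2\bigr\Vert_2$.

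The one point that needs care is the size of $\sigma$: the theorem gives $\vert\sigma\vert\geqslant (1-\varepsilon)^2\,\Vert U\Vert_{\rm HS}^2/\Vert U\Vert^2$ with $U=TY$, whereas we want this quantity with $\Vert T\Vert_{\rm HS}$ and $\Vert T\Vert$ in place of $\Vert U\Vert_{\rm HS}$ and $\Vert U\Vert$. Since $YY^t=Id$, the matrix $Y$ is a co-isometry (a partial isometry with $\Vert Y\Vert=1$), hence $\Vert TY\Vert\leqslant \Vert T\Vert$. For the Hilbert--Schmidt norm I would compute
$$
\Vert TY\Vert_{\rm HS}^2 = {\rm Tr}\bigl(TY(TY)^t\bigr) = {\rm Tr}\bigl(T\,(YY^t)\,T^t\bigr) = {\rm Tr}(TT^t) = \Vert T\Vert_{\rm HS}^2 .
$$
Therefore
$$
\frac{\Vert U\Vert_{\rm HS}^2}{\Vert U\Vert^2} = \frac{\Vert TY\Vert_{\rm HS}^2}{\Vert TY\Vert^2} \geqslant \frac{\Vert T\Vert_{\rm HS}^2}{\Vert T\Vert^2},
$$
so the lower bound on $\vert\sigma\vert$ only improves when passing from $T$ to $U=TY$, and in particular $\vert\sigma\vert\geqslant (1-\varepsilon)^2\,\Vert T\Vert_{\rm HS}^2/\Vert T\Vert^2$ as claimed. (The relabelling of the index set $\sigma$ as $\{1,\dots,k\}$ in the statement's scalars $(a_j)_{j\leqslant k}$ is purely cosmetic, $k=\vert\sigma\vert$.)

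The main obstacle — really the only thing to check — is thus the bookkeeping with norms under right-multiplication by the co-isometry $Y$; everything else is a direct substitution into Theorem~\ref{big block-colomn selection-smallest and largest singular value}. I would also remark, as the authors presumably will, that by John's theorem \cite{MR0030135} one may take the $y_j$ to be contact points of the unit ball of an $n$-dimensional normed space with its maximal-volume ellipsoid, which is what makes this reformulation the useful one for the local theory of Banach spaces and for comparison with Vershynin's Theorem B.
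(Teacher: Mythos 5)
Your proposal is correct and essentially identical to the paper's argument: both take $U$ to be the $n\times m$ matrix with columns $Ty_j$ (your $U=TY$) and apply Theorem~\ref{big block-colomn selection-smallest and largest singular value}, the only cosmetic difference being that the paper deduces $\Vert U\Vert_{\rm HS}=\Vert T\Vert_{\rm HS}$ and $\Vert U\Vert=\Vert T\Vert$ at once from $UU^t=TT^t$, while you get the Hilbert--Schmidt equality from the trace computation and the (sufficient) operator-norm inequality $\Vert TY\Vert\leqslant\Vert T\Vert$ from $Y$ being a co-isometry.
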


\begin{proof}

Let $U$ be the $n\times m$ matrix whose columns are 
$Ty_j$. Therefore we can write
$$
UU^t= \sum_{j\leqslant m} \left(Ty_j\right)\cdot\left(Ty_j\right)^t 
=TT^t
$$ 
We deduce that $\Vert U\Vert_{\rm HS}=\Vert T\Vert_{\rm HS}$ and 
$\Vert U\Vert=\Vert T\Vert$.
Applying Theorem~\ref{big block-colomn selection-smallest and largest singular value} 
to $U$, we find $\sigma\subset\{1,...,m\}$ such that
$$
\vert \sigma\vert \geqslant (1-\varepsilon)^2\frac{\Vert T\Vert_{\rm HS}^2}{\Vert T\Vert^2}
$$
and for all $(a_j)_{j\in \sigma}$,
$$
\frac{\varepsilon}{2-\varepsilon}\left(\sum_{j\in\sigma} a_j^2\right)^{\frac{1}{2}}\leqslant 
\left\Vert \sum_{j\in\sigma} a_j\frac{Ue_j}{\Vert Ue_j\Vert_2}\right\Vert_2\leqslant \frac{2-\varepsilon}{\varepsilon}\left(\sum_{j\in\sigma} a_j^2\right)^{\frac{1}{2}}
$$
Noting that $\frac{Ue_j}{\Vert Ue_j\Vert_2}=\frac{Ty_j}{\Vert Ty_j\Vert_2}$, we finish the proof.

\end{proof}

This result improves the dependence on $\varepsilon$ 
in comparison with Vershynin's result  \cite{MR1826503}. While Vershynin proved
 that $(Ty_j)_{j\in\sigma}$ is $c(\varepsilon)$-equivalent to an 
orthogonal basis, the value of $c(\varepsilon)$ was of the order of $\varepsilon^{-c\log(\varepsilon)}$. 
Here our sequence is $(4\varepsilon^{-2})$-equivalent to an orthogonal basis. 
Using Corollary~\ref{small block-colomn selection-smallest and largest singular value}, one can write the 
previous proposition in the regime where $\varepsilon$ is close to one.
%\begin{coro}
%Let $Id=\sum_{i\leqslant m} y_iy_i^t$ a decomposition of the identity 
%in $\mathbb{R}^n$ and $T$ a linear 
%operator on $l_2^n$.  For any $\varepsilon \in (0,1)$, there exists 
%$\sigma\subset\{1,...,m\}$ such that
%$$
%\vert\sigma\vert\geqslant \frac{\varepsilon^2\Vert T\Vert_{\rm HS}^2}{9\Vert T\Vert^2}
%$$
%and for all $(a_j)_{j\leqslant k}$,
%$$
%(1-\varepsilon)\left(\sum_{j\leqslant k} a_j^2\right)^{\frac{1}{2}}
%\leqslant \left\Vert\sum_{j\leqslant k} a_j\frac{Ty_j}{\Vert Ty_j\Vert_2}\right\Vert_2
%\leqslant (1+\varepsilon)\left(\sum_{j\leqslant k} a_j^2\right)^{\frac{1}{2}}
%$$
%\end{coro}

The previous result can be written in terms of contact points. 
Let us for instance write the case of $T=Id$. If $X =(\mathbb{R}^n, \Vert\cdot\Vert)$ 
where $\Vert\cdot\Vert$ is a norm on $\mathbb{R}^n$ such that $B_2^n$ is 
the ellipsoid of maximal volume contained in $B_X$ the unit ball of $X$, then by John's theorem 
one can get an identity decomposition formed by contact points of $B_X$ with $B_2^n$. 
Applying Proposition~\ref{prop-amelioration-vershynin} we get the following:

\begin{prop}\label{prop-contactpoints}
Let $X =(\mathbb{R}^n, \Vert\cdot\Vert)$ where $\Vert\cdot\Vert$ is a norm on $\mathbb{R}^n$ such that $B_2^n$ is 
the ellipsoid of maximal volume contained in $B_X$. For any $\varepsilon \in (0,1)$, 
there exists $x_1,...,x_k$ contact points of $B_X$ with $B_2^n$ 
such that 
$$
k\geqslant (1-\varepsilon)^2n
$$
and for all $(a_j)_{j\leqslant k}$,
$$
\frac{\varepsilon}{2-\varepsilon}\left(\sum_{j\leqslant k} a_j^2\right)^{\frac{1}{2}}
\leqslant \left\Vert\sum_{j\leqslant k} a_jx_j\right\Vert_2
\leqslant \frac{2-\varepsilon}{\varepsilon} \left(\sum_{j\leqslant k} a_j^2\right)^{\frac{1}{2}}.
$$
\end{prop}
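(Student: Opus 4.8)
The plan is to deduce this directly from Proposition~\ref{prop-amelioration-vershynin} applied to the operator $T=Id$, once a suitable decomposition of the identity has been produced out of contact points. First I would invoke John's theorem \cite{MR0030135}: since $B_2^n$ is the ellipsoid of maximal volume contained in $B_X$, there exist contact points $u_1,\dots,u_m$ of $B_X$ with $B_2^n$ (so $|u_i|_2=1$ and $u_i\in\partial B_X$) together with positive scalars $c_1,\dots,c_m>0$ such that $Id=\sum_{i\leqslant m}c_iu_iu_i^t$. Setting $y_i=\sqrt{c_i}\,u_i$, this becomes $Id=\sum_{i\leqslant m}y_iy_i^t$, a genuine decomposition of the identity in $\mathbb{R}^n$ of the kind required by Proposition~\ref{prop-amelioration-vershynin}.

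Next, apply Proposition~\ref{prop-amelioration-vershynin} with $T=Id$. Since $\Vert Id\Vert_{\rm HS}^2=n$ and $\Vert Id\Vert^2=1$, one obtains $\sigma\subset\{1,\dots,m\}$ with $\vert\sigma\vert\geqslant(1-\varepsilon)^2 n$ and the announced two-sided estimate for the normalized vectors $\frac{Ty_j}{\Vert Ty_j\Vert_2}=\frac{y_j}{|y_j|_2}$. The key (and essentially only) observation is that the normalization is trivial here:
$$
\frac{y_j}{|y_j|_2}=\frac{\sqrt{c_j}\,u_j}{\sqrt{c_j}\,|u_j|_2}=u_j,
$$
because $|u_j|_2=1$. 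Hence the normalized vectors produced by the proposition are exactly the contact points $u_j$, $j\in\sigma$.

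Finally, relabel $\{u_j:j\in\sigma\}$ as $x_1,\dots,x_k$ with $k=\vert\sigma\vert\geqslant(1-\varepsilon)^2 n$; these are contact points of $B_X$ with $B_2^n$, and the inequality of Proposition~\ref{prop-amelioration-vershynin} becomes precisely
$$
\frac{\varepsilon}{2-\varepsilon}\Big(\sum_{j\leqslant k}a_j^2\Big)^{\frac12}\leqslant\Big\Vert\sum_{j\leqslant k}a_jx_j\Big\Vert_2\leqslant\frac{2-\varepsilon}{\varepsilon}\Big(\sum_{j\leqslant k}a_j^2\Big)^{\frac12}
$$
for all scalars $(a_j)_{j\leqslant k}$, which is the claim. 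There is no substantive obstacle beyond correctly setting up the John decomposition; the point worth stating explicitly is that invoking Proposition~\ref{prop-amelioration-vershynin} forces one to pass through the weighted family $y_i=\sqrt{c_i}\,u_i$ (so that $\sum y_iy_i^t=Id$), while the cleanliness of the final statement — contact points with no normalizing denominators — relies exactly on the contact points already being unit vectors, so that the $\sqrt{c_i}$ cancel in the normalization. If desired, the analogous statement in the regime $\varepsilon$ close to $1$ follows in the same way from Corollary~\ref{small block-colomn selection-smallest and largest singular value} (equivalently, from the version of Proposition~\ref{prop-amelioration-vershynin} derived from it).
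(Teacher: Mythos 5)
Your argument is correct and is exactly the route the paper takes: John's theorem gives $Id=\sum_i c_i u_i u_i^t$ with contact points $u_i$, one applies Proposition~\ref{prop-amelioration-vershynin} with $T=Id$ (so $\Vert T\Vert_{\rm HS}^2/\Vert T\Vert^2=n$), and the normalization is trivial because the contact points are unit vectors, so the $\sqrt{c_i}$ cancel. Nothing further is needed.
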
 

In other terms, we can find a system of almost $n$ contact points which is 
$(4\varepsilon^{-2})$-equivalent to an orthonormal basis.  Looking at the lower bound, this 
gives a proportional Dvoretzky-Rogers factorization \cite{MR1301496} with the best known 
dependence on $\varepsilon$.

If we are willing to give up on the fact 
of extracting a large number of contact points, we can have a system of contact points 
which is $(1+\varepsilon)$-equivalent to an orthonormal basis. For that we write the previous 
proposition in the regime where $\varepsilon$ is close to $1$.

\section{Column paving}
%%%%%%%%%%%%%%%%%%%%%%%%%%%%%%%%%%%%%%%%%%%%%%%%%%%%%%%%%%%%%%%%%%%%%%%%%%%%%%%%%%%%
%%%%%%%%%%%%%%%%%%%%%%%%%%%%%%%%%%%%%%%%%%%%%%%%%%%%%%%%%%%%%%%%%%%%%%%%%%%%%%%%%%%%

Extracting a large column submatrix reveals 
to be useful since the extracted matrix may have 
better properties. First results in this direction were 
given by Kashin in \cite{MR604848}, and others followed 
improving or dealing with different properties (see \cite{MR2780071}, \cite{MR890420}, \cite{kashin-tzafriri},
\cite{MR1001700},\cite{MR2807539}).
One can also be interested 
in partitioning the matrix into disjoint sets of columns 
such that each block has "good" properties. Obtaining 
a constant number of blocks (independent of the dimension) 
turns out to be a difficult problem and many 
conjectures concerning this were given previously (see \cite{MR2359423}).

The previous algorithms for 
extraction used probabilistic arguments and 
Grothendieck's factorization theorem. Here 
we propose a deterministic algorithm to achieve 
the extraction. We apply our main result 
iteratively in order to partition the matrix 
into blocks such that on each of them we have good 
estimates on the singular values.

\begin{defi}
Let $U$ a $n\times m$ matrix. We will say that $U$ is 
standardized if all its columns are of norm $1$.
\end{defi}

Note that when $U$ is standardized we have $\Vert U\Vert_{\rm HS}^2=m$ and $\Vert U\Vert\geqslant 1$. 
Applying Theorem~\ref{big block-colomn selection-smallest and largest singular value} to a standardized 
matrix, we get the following proposition:

\begin{prop}\label{prop-standardized-big-block}
Let $U$ a $n\times m$ standardized matrix. 
For $\varepsilon\in (0,1)$, there exists $\sigma\subset\{1,...,m\}$ 
with 
$$
\vert \sigma\vert\geqslant \frac{(1-\varepsilon)^2m}{\Vert U\Vert^2}
$$
such that 
$$
\frac{\varepsilon}{2-\varepsilon} \leqslant s_{\min}\left(U_{\sigma}\right)\leqslant s_{\max}\left(U_{\sigma}\right)
\leqslant \frac{2-\varepsilon}{\varepsilon}
$$
\end{prop}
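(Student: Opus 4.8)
The plan is to obtain Proposition~\ref{prop-standardized-big-block} as an immediate specialization of Theorem~\ref{big block-colomn selection-smallest and largest singular value} to the case of a standardized matrix, the only point being to unwind what normalization does in this setting. First I would observe that when $U$ is standardized, every column has $\Vert Ue_j\Vert_2=1$, so the normalized matrix $\widetilde{U}$ coincides with $U$ itself; in particular $\widetilde{U}_\sigma=U_\sigma$ for every $\sigma$. Next I would recall, as noted just before the statement, that a standardized $n\times m$ matrix satisfies $\Vert U\Vert_{\rm HS}^2={\rm Tr}(UU^t)=\sum_{j\leqslant m}\Vert Ue_j\Vert_2^2=m$, so that the quantity $\frac{\Vert U\Vert_{\rm HS}^2}{\Vert U\Vert^2}$ appearing in the size bound of Theorem~\ref{big block-colomn selection-smallest and largest singular value} becomes exactly $\frac{m}{\Vert U\Vert^2}$.

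With these two identifications in hand, the proof is just a substitution. Fix $\varepsilon\in(0,1)$ and apply Theorem~\ref{big block-colomn selection-smallest and largest singular value} to $U$: this yields a subset $\sigma\subset\{1,\dots,m\}$ with
$$
\vert\sigma\vert\geqslant (1-\varepsilon)^2\frac{\Vert U\Vert_{\rm HS}^2}{\Vert U\Vert^2}=\frac{(1-\varepsilon)^2 m}{\Vert U\Vert^2}
$$
and such that
$$
\frac{\varepsilon}{2-\varepsilon}\leqslant s_{\min}\bigl(\widetilde{U}_\sigma\bigr)\leqslant s_{\max}\bigl(\widetilde{U}_\sigma\bigr)\leqslant\frac{2-\varepsilon}{\varepsilon}.
$$
Since $\widetilde{U}_\sigma=U_\sigma$, replacing $\widetilde{U}_\sigma$ by $U_\sigma$ in the singular value estimates and in the size bound gives precisely the statement of Proposition~\ref{prop-standardized-big-block}, completing the proof.

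There is essentially no obstacle here: the content is entirely carried by Theorem~\ref{big block-colomn selection-smallest and largest singular value}, and the proposition is a cosmetic reformulation tailored for the column-paving application in this section. The only thing to be slightly careful about is the bookkeeping of the two simplifications, namely $\widetilde{U}=U$ and $\Vert U\Vert_{\rm HS}^2=m$, both of which are direct consequences of the columns having norm one; I would state them explicitly so that the reader sees why the general bound collapses to the dimension-type bound $\frac{(1-\varepsilon)^2 m}{\Vert U\Vert^2}$ used later to iterate the extraction into a paving.
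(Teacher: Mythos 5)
Your proposal is correct and matches the paper's own route: the proposition is stated there as an immediate consequence of Theorem~\ref{big block-colomn selection-smallest and largest singular value}, using exactly the two observations you make, namely that a standardized matrix satisfies $\widetilde{U}=U$ and $\Vert U\Vert_{\rm HS}^2=m$. Nothing further is needed.
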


In the regime where $\varepsilon$ is close to one, the previous 
proposition yields an almost isometric estimation.
%\begin{coro}\label{coro-standardized-small-block}
%Let $U$ a $n\times m$ standardized matrix. 
%For $\varepsilon\in (0,1)$, there exists $\sigma\subset\{1,...,m\}$ 
%with 
%$$
%\vert \sigma\vert\geqslant \frac{\varepsilon^2m}{9\Vert U\Vert^2}
%$$
%such that 
%$$
%1-\varepsilon \leqslant s_{\min}\left(U_{\sigma}\right)\leqslant s_{\max}\left(U_{\sigma}\right)
%\leqslant 1+\varepsilon
%$$
%\end{coro}
Let us now use Proposition~\ref{prop-standardized-big-block} in order to 
partition the matrix into "good" blocks.
\begin{prop}\label{partition-big-block}
Let $U$ be an $n\times m$ standardized matrix. 
For $\varepsilon \in (0,1)$, there exists a partition of $\{1,...,m\}$ into 
$p$ sets $\sigma_1,...,\sigma_p$ such that
$$
p\leqslant \frac{\Vert U\Vert^2\log(m)}{(1-\varepsilon)^2}
$$
and for any $i\leqslant p$,
$$
\frac{\varepsilon}{2-\varepsilon} \leqslant s_{\min}\left(U_{\sigma_i}\right)\leqslant s_{\max}\left(U_{\sigma_i}\right)
\leqslant \frac{2-\varepsilon}{\varepsilon}
$$
\end{prop}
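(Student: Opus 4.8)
The plan is to construct the partition greedily by repeatedly applying Proposition~\ref{prop-standardized-big-block} to the submatrix formed by the columns not yet assigned to any block. Concretely, set $\tau_0=\{1,\dots,m\}$ and suppose that after some steps we have peeled off disjoint blocks $\sigma_1,\dots,\sigma_{i}$ and are left with the index set $\tau_i=\{1,\dots,m\}\setminus(\sigma_1\cup\dots\cup\sigma_i)$. As long as $\tau_i\neq\varnothing$, the matrix $U_{\tau_i}$ is still standardized (its columns are a subset of the columns of $U$, hence of norm one), so Proposition~\ref{prop-standardized-big-block} applies to it. The key point is the monotonicity of the operator norm under column restriction: since $U_{\tau_i}$ is a submatrix of $U$, we have $\Vert U_{\tau_i}\Vert \leqslant \Vert U\Vert$. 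Therefore Proposition~\ref{prop-standardized-big-block}, applied to $U_{\tau_i}$ with the same $\varepsilon$, produces $\sigma_{i+1}\subset\tau_i$ with the prescribed singular value bounds $\frac{\varepsilon}{2-\varepsilon}\leqslant s_{\min}(U_{\sigma_{i+1}})\leqslant s_{\max}(U_{\sigma_{i+1}})\leqslant\frac{2-\varepsilon}{\varepsilon}$ and, crucially, of size
$$
\vert\sigma_{i+1}\vert \geqslant \frac{(1-\varepsilon)^2\vert\tau_i\vert}{\Vert U_{\tau_i}\Vert^2} \geqslant \frac{(1-\varepsilon)^2}{\Vert U\Vert^2}\,\vert\tau_i\vert .
$$

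Next I would track the decay of the number of remaining columns. Writing $\theta=\frac{(1-\varepsilon)^2}{\Vert U\Vert^2}\in(0,1]$ (note $\Vert U\Vert\geqslant 1$ and $(1-\varepsilon)^2<1$, so indeed $\theta\leqslant 1$), the bound above gives
$$
\vert\tau_{i+1}\vert = \vert\tau_i\vert - \vert\sigma_{i+1}\vert \leqslant (1-\theta)\vert\tau_i\vert,
$$
hence by induction $\vert\tau_i\vert\leqslant (1-\theta)^i m$. We stop the process at the first index $p$ for which $\tau_p=\varnothing$; since $\vert\tau_i\vert$ is a nonnegative integer, the process terminates as soon as $(1-\theta)^{p-1}m<1$, i.e. $\vert\tau_{p-1}\vert=0$ forces $p-1$ to be at most the first integer with $(1-\theta)^{p-1}m<1$. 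Using $\log(1-\theta)\leqslant -\theta$ one gets that $(1-\theta)^{t}m<1$ as soon as $t>\frac{\log m}{\theta}$, so $p\leqslant \frac{\log m}{\theta}+1 = \frac{\Vert U\Vert^2\log m}{(1-\varepsilon)^2}+1$. (The stated bound $p\leqslant \Vert U\Vert^2\log(m)/(1-\varepsilon)^2$ then holds up to the harmless additive constant, or exactly after absorbing it; in any case the order is $\Vert U\Vert^2\log m$, which is what matters and what the statement records.)

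Finally, $\sigma_1,\dots,\sigma_p$ are by construction pairwise disjoint and their union is $\{1,\dots,m\}$ (we stop exactly when nothing is left), so they form a partition; and each block satisfies the required singular value estimates because each was produced by Proposition~\ref{prop-standardized-big-block}. The main — and really the only — subtlety is the monotonicity $\Vert U_{\tau_i}\Vert\leqslant\Vert U\Vert$, which guarantees that the \emph{same} lower bound $\theta$ on the proportion of columns removed holds at every step regardless of which columns remain; without a dimension-free bound of this kind the geometric decay, and hence the $\log m$ count, would break down. Everything else is the elementary geometric-series bookkeeping sketched above.
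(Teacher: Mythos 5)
Your proof is correct and follows exactly the paper's argument: apply Proposition~\ref{prop-standardized-big-block} to the remaining columns, use standardization together with $\Vert U_{\tau_i}\Vert\leqslant\Vert U\Vert$ to remove a fixed fraction $\theta=(1-\varepsilon)^2/\Vert U\Vert^2$ of the columns at every step, and conclude by the geometric-decay count (which the paper compresses into ``a standard inductive argument finishes the proof''). The additive $+1$ you flag is the same harmless imprecision already present in the paper's stated bound (which, read literally, fails for $m=1$), so nothing essential is missing.
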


\begin{proof}
Apply Proposition~\ref{prop-standardized-big-block} 
to $U$ in order to get $\sigma_1$ verifying
 $$
\vert \sigma_1\vert\geqslant \frac{(1-\varepsilon)^2}{\Vert U\Vert^2}m
 $$ 
 such that 
$$
\frac{\varepsilon}{2-\varepsilon} \leqslant s_{\min}\left(U_{\sigma_1}\right)\leqslant s_{\max}\left(U_{\sigma_1}\right)
\leqslant \frac{2-\varepsilon}{\varepsilon}
$$
Now note that $U_{\sigma_1^c}$ is an $n\times \vert \sigma_1^c\vert$ 
standardized matrix and $\Vert U_{\sigma_1^c}\Vert\leqslant \Vert U\Vert$. 
A standard inductive argument finishes the proof.
\end{proof}

In the regime where $\varepsilon$ is close to one, 
the previous proposition yields a column partition with almost 
isometric blocks. This recovers, with a deterministic method, a result of Tropp (see Theorem 1.2 in \cite{MR2807539}), 
which follows results of Bourgain-Tzafriri \cite{MR890420}.
\begin{coro}\label{partition-small-block}
Let $U$ a $n\times m$ standardized matrix.
For $\varepsilon \in (0,1)$, there exists a partition of $\{1,...,m\}$ into 
$p$ sets $\sigma_1,...,\sigma_p$ such that
$$
p\leqslant \frac{9\Vert U\Vert^2\log(m)}{\varepsilon^2}
$$
and for any $i\leqslant p$,
$$
1-\varepsilon \leqslant s_{\min}\left(U_{\sigma_i}\right)\leqslant s_{\max}\left(U_{\sigma_i}\right)
\leqslant 1+\varepsilon
$$
\end{coro}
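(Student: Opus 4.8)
The plan is to simply specialize Proposition~\ref{partition-big-block} to the regime where $\varepsilon$ is close to $1$, exactly in the way Corollary~\ref{small block-colomn selection-smallest and largest singular value} was derived from Theorem~\ref{big block-colomn selection-smallest and largest singular value}. Concretely, given a target accuracy $\eta\in(0,1)$ for the singular value estimate $1-\eta\leqslant s_{\min}(U_{\sigma_i})\leqslant s_{\max}(U_{\sigma_i})\leqslant 1+\eta$, I would look for a parameter $\varepsilon=\varepsilon(\eta)$ close to $1$ so that $\tfrac{\varepsilon}{2-\varepsilon}\geqslant 1-\eta$ and $\tfrac{2-\varepsilon}{\varepsilon}\leqslant 1+\eta$. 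Both inequalities reduce to $2-\varepsilon\leqslant(1+\eta)\varepsilon$, i.e. $\varepsilon\geqslant\tfrac{2}{2+\eta}$, so one may take $\varepsilon=\tfrac{2}{2+\eta}$, which gives $1-\varepsilon=\tfrac{\eta}{2+\eta}$ and hence $(1-\varepsilon)^2=\tfrac{\eta^2}{(2+\eta)^2}\geqslant\tfrac{\eta^2}{9}$ since $\eta<1$.

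First I would apply Proposition~\ref{partition-big-block} with this choice of $\varepsilon$ to the standardized matrix $U$, obtaining a partition $\sigma_1,\dots,\sigma_p$ of $\{1,\dots,m\}$ with
$$
p\leqslant\frac{\Vert U\Vert^2\log(m)}{(1-\varepsilon)^2}=\frac{(2+\eta)^2\Vert U\Vert^2\log(m)}{\eta^2}\leqslant\frac{9\Vert U\Vert^2\log(m)}{\eta^2},
$$
and with $\tfrac{\eta}{2+\eta}=\tfrac{\varepsilon}{2-\varepsilon}\leqslant s_{\min}(U_{\sigma_i})\leqslant s_{\max}(U_{\sigma_i})\leqslant\tfrac{2-\varepsilon}{\varepsilon}=\tfrac{2+\eta}{2}\cdot\ldots$; more carefully, the bounds I arranged above give directly $1-\eta\leqslant s_{\min}(U_{\sigma_i})$ and $s_{\max}(U_{\sigma_i})\leqslant 1+\eta$ for every $i\leqslant p$. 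Renaming $\eta$ back to $\varepsilon$ yields precisely the statement of the corollary.

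There is essentially no obstacle here: the only thing to check is that the elementary inequalities $\tfrac{\varepsilon}{2-\varepsilon}\geqslant 1-\eta$ and $\tfrac{2-\varepsilon}{\varepsilon}\leqslant 1+\eta$ hold for the chosen $\varepsilon$, and that $(2+\eta)^2\leqslant 9$ for $\eta\in(0,1)$, both of which are immediate. I would also note that the same substitution applied to Proposition~\ref{prop-standardized-big-block} recovers Corollary~\ref{small block-colomn selection-smallest and largest singular value} in the form of a single "almost isometric" extracted block, consistent with the remark preceding the corollary. Thus the proof is a one-line reduction to Proposition~\ref{partition-big-block} together with this choice of constant.
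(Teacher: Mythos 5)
Your proposal is correct and follows essentially the same route as the paper, which obtains the corollary by specializing Proposition~\ref{partition-big-block} to the regime where its parameter is close to $1$ (the paper's implicit choice is of the form $\varepsilon'=1-\varepsilon/3$, while yours is $\varepsilon'=\tfrac{2}{2+\varepsilon}$; both give $(1-\varepsilon')^2\geqslant \varepsilon^2/9$ and hence the same bound on $p$). The only cosmetic issue is the imprecise claim that both inequalities ``reduce to'' $2-\varepsilon\leqslant(1+\eta)\varepsilon$ — in fact only the upper-bound inequality does, but since it implies the lower-bound one via $\tfrac{1}{1+\eta}\geqslant 1-\eta$, the argument stands.
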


\vskip 0.3cm

The number of blocks here depends on the dimension. The challenging 
problem is to partition into a number of blocks which does not depend 
on the dimension. This would give a positive solution to the paving conjecture 
(see \cite{MR2359423} for related problems).

%%%%%%%%%%%%%%%%%%%%%%%%%%%%%%%%%%%%%%%%%%%%%%%%%%%%%%%%%%%%%%%%%%%%%%
%%%%%%%%%%%%%%%%%%%%%%%%%%%%%%%%%%%%%%%%%%%%%%%%%%%%%%%%%%%%%%%%%%%%%%
\section{Extracting square submatrix with small norm}
%%%%%%%%%%%%%%%%%%%%%%%%%%%%%%%%%%%%%%%%%%%%%%%%%%%%%%%%%%%%%%%%%%%%%%
%%%%%%%%%%%%%%%%%%%%%%%%%%%%%%%%%%%%%%%%%%%%%%%%%%%%%%%%%%%%%%%%%%%%%%

Let us first note that all previous sections trivially extend to complex matrices. 
In this section, we will show how using our main result 
we can answer Naor's question \cite{naor}: find an algorithm, using 
the Batson-Spielman-Srivastava's method \cite{MR2780071}, to prove 
Theorem~C \cite{MR890420}. However, we will be able to do this 
only for hermitian matrices.

\begin{prop}
Let $T$ an $n\times n$ Hermitian matrix with $0$ diagonal. 
For any $\varepsilon \in (0,1) $, there exists $\sigma\subset\{1,...,n\}$ 
of size
$$
\vert \sigma\vert \geqslant \frac{(\sqrt{2}-1)^4\varepsilon^2n}{2}
$$ 
such that 
$$
\left\Vert P_\sigma TP_\sigma^*\right\Vert\leqslant \varepsilon \Vert T\Vert
$$ 
\end{prop}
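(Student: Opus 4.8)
The plan is to reduce the problem to an application of Corollary~\ref{small block-colomn selection-smallest and largest singular value} (the ``close to one'' regime of the main theorem) applied to a suitable auxiliary matrix, and then to exploit the vanishing diagonal of $T$ together with a spectral decomposition of $T$.

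First I would diagonalize. Since $T$ is Hermitian, write $T = \sum_{i\leqslant n}\lambda_i f_i f_i^*$ with $(f_i)$ an orthonormal basis and $|\lambda_i|\leqslant \Vert T\Vert$. The key observation is that the hypothesis $\langle Te_j,e_j\rangle =0$ says exactly that $\sum_i \lambda_i |\langle f_i,e_j\rangle|^2 = 0$ for every $j$, i.e. the ``signed'' columns behave well. To use the column selection machinery, I would introduce the $n\times n$ matrix $V$ whose columns are $Ve_j = \big(\lambda_i^{1/2}\langle f_i, e_j\rangle\big)_i$ in the basis $(f_i)$ — more precisely, split $\{1,\dots,n\}=P\cup N$ according to the sign of $\lambda_i$ and work with $V^+$ (rows where $\lambda_i\geqslant 0$) and $V^-$ (rows where $\lambda_i<0$), so that $P_\sigma T P_\sigma^* = (V^+_\sigma)^*V^+_\sigma - (V^-_\sigma)^*V^-_\sigma$ up to the identification of bases. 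The columns of $V$ have squared norm $\sum_i |\lambda_i||\langle f_i,e_j\rangle|^2$; the zero-diagonal condition forces $\sum_{i\in P}\lambda_i|\langle f_i,e_j\rangle|^2 = \sum_{i\in N}|\lambda_i||\langle f_i,e_j\rangle|^2$, so each of $V^+, V^-$ has a column of squared norm $\tfrac12\Vert Ve_j\Vert_2^2$. One then checks $\Vert V^\pm\Vert^2 \leqslant \Vert T\Vert$ and $\Vert V^\pm\Vert_{\rm HS}^2 = \tfrac12\sum_j\Vert Ve_j\Vert_2^2 \leqslant \tfrac12 n\Vert T\Vert$, hmm — rather the stable rank is controlled from below by a constant times $n$ after we arrange the columns to have comparable norms; the factor $(\sqrt2-1)^4$ in the statement strongly suggests two nested applications of the ``$1\pm\varepsilon$'' corollary (one to regularize column norms of a standardized version, one to the signed matrix), each contributing a $(\sqrt2-1)^2$-type loss.

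So concretely I would: (i) normalize to get a standardized matrix and split off the ``small column'' indices, applying Corollary~\ref{small block-colomn selection-smallest and largest singular value} once to pass to a subset $\sigma_0$ of size $\gtrsim\varepsilon^2 n/\Vert T\Vert^2\cdot(\dots)$ on which $\widetilde V^+_{\sigma_0}$ and $\widetilde V^-_{\sigma_0}$ are both $(1+\varepsilon')$-bounded in operator norm; (ii) on that subset, estimate $\Vert P_\sigma T P_\sigma^*\Vert \leqslant \Vert (V^+_\sigma)^*V^+_\sigma\Vert + \Vert (V^-_\sigma)^*V^-_\sigma\Vert$ and bound each term by $s_{\max}(\widetilde V^\pm_\sigma)^2$ times the (small, order $\varepsilon^2\Vert T\Vert$) common column norm squared, which is where the gain $\leqslant \varepsilon\Vert T\Vert$ comes from; (iii) optimize the split of $\varepsilon$ into the various $\varepsilon'$'s to land the stated constant $(\sqrt2-1)^4/2$.

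The main obstacle I anticipate is bookkeeping the column norms: after normalizing, applying the corollary gives control of singular values of the \emph{normalized} restriction $\widetilde V_\sigma$, but what enters the norm bound on $P_\sigma T P_\sigma^*$ is $\Vert V_\sigma\Vert^2$, so I must ensure the retained columns of $V$ have uniformly \emph{small} norm (of order $\varepsilon\sqrt{\Vert T\Vert}\cdot$something), not just uniformly comparable norm. This forces a preliminary pruning step — discard the $\leqslant (1-c)n$ columns $e_j$ for which $\Vert Ve_j\Vert_2^2$ exceeds twice the average $\tfrac{1}{2n}\Vert T\Vert_{\rm HS}^2$; by Markov at least a constant fraction survive, and on them $\Vert Ve_j\Vert_2^2 \lesssim \Vert T\Vert_{\rm HS}^2/n \leqslant \Vert T\Vert$, which is still not obviously $\lesssim \varepsilon^2\Vert T\Vert$ in general. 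Reconciling this — presumably via iterating the argument on dyadic scales of column norm, or by a cleverer choice of auxiliary matrix that builds the $\varepsilon$ in from the start — is the delicate point; once the small-norm standardized sub-block is in hand, Corollary~\ref{small block-colomn selection-smallest and largest singular value} finishes the job mechanically.
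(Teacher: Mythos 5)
Your reduction to the signed square roots $V^{+},V^{-}$ has a gap that is not a bookkeeping issue but is fatal to the route as described. In step (ii) you bound $\Vert P_\sigma T P_\sigma^*\Vert \leqslant \Vert (V^+_\sigma)^*V^+_\sigma\Vert + \Vert (V^-_\sigma)^*V^-_\sigma\Vert$. Each summand is at least its largest diagonal entry, i.e. at least $\tfrac12\max_{j\in\sigma}\langle |T|e_j,e_j\rangle$, and this can equal $\tfrac12\Vert T\Vert$ for \emph{every} column: take $T$ block diagonal with $2\times 2$ blocks $\bigl(\begin{smallmatrix}0 & 1\\ 1 & 0\end{smallmatrix}\bigr)$, so that $\Vert T\Vert=1$ and $|T|=Id$. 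Then every column of $V^{\pm}$ has norm $1/\sqrt2$, your bound is at least $\Vert T\Vert$ for any nonempty $\sigma$, yet a good choice of $\sigma$ (one index per block, so $|\sigma|=n/2$) gives $P_\sigma TP_\sigma^*=0$. The triangle inequality discards precisely the cancellation between the positive and negative parts that the conclusion is about, so the ``delicate point'' you flag — forcing the retained columns of $V$ to have norm of order $\varepsilon\sqrt{\Vert T\Vert}$ — cannot be repaired by pruning or dyadic decomposition of column norms: in the example above all columns of $V$ have norm squared exactly $\Vert T\Vert$, and more generally the average column norm squared is ${\rm Tr}|T|/n$, which can be as large as $\Vert T\Vert$.

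The paper keeps the cancellation by shifting rather than splitting: set $A=T+\Vert T\Vert\cdot Id\succeq 0$ and $U=A^{1/2}$. The zero diagonal gives $\Vert Ue_j\Vert_2^2=\langle Ae_j,e_j\rangle=\Vert T\Vert$ for all $j$, so $\widetilde U=U/\Vert T\Vert^{1/2}$ is standardized with no pruning at all, and $\Vert\widetilde U\Vert^2\leqslant 2$, so the stable rank is at least $n/2$. Applying Proposition~\ref{prop-standardized-big-block} with parameter $1-\alpha\varepsilon$, $\alpha=(\sqrt2-1)^2$, yields $\sigma$ with $|\sigma|\geqslant \alpha^2\varepsilon^2 n/2=(\sqrt2-1)^4\varepsilon^2 n/2$ and $\frac{1-\alpha\varepsilon}{1+\alpha\varepsilon}\leqslant s_{\min}(\widetilde U_\sigma)\leqslant s_{\max}(\widetilde U_\sigma)\leqslant\frac{1+\alpha\varepsilon}{1-\alpha\varepsilon}$; since $\widetilde U^*\widetilde U=A/\Vert T\Vert$, this sandwiches $P_\sigma AP_\sigma^*$ between $(1-\varepsilon)\Vert T\Vert\cdot Id_\sigma$ and $(1+\varepsilon)\Vert T\Vert\cdot Id_\sigma$, and subtracting $\Vert T\Vert\cdot Id_\sigma$ gives $\Vert P_\sigma TP_\sigma^*\Vert\leqslant\varepsilon\Vert T\Vert$. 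The lesson is that what one needs is a \emph{two-sided} control of a positive matrix close to a multiple of the identity (exactly what Theorem~\ref{big block-colomn selection-smallest and largest singular value} provides), not a small upper bound on each of two positive pieces whose difference is $P_\sigma TP_\sigma^*$.
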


\begin{proof}
Denote $A=T+\Vert T\Vert\cdot Id$, then $A$ is a positive semidefinite Hermitian matrix 
so we may take $U=A^{\frac{1}{2}}$. First note that since $T$ has $0$ diagonal then
$$
\Vert Ue_i\Vert_2^2=\left<Ue_i,Ue_i\right> = \left<Ae_i,e_i\right>=\Vert T\Vert 
$$
Therefore $\widetilde{U}=\frac{U}{\Vert T\Vert^{\frac{1}{2}}}$ is a standardized matrix. Moreover 
$\Vert \widetilde{U}\Vert^2= 2$. \\

Denote $\alpha =(\sqrt{2}-1)^2$ and 
apply Proposition~\ref{prop-standardized-big-block} with $1-\alpha\varepsilon$ to find 
$\sigma\subset\{1,...,n\}$ of size $\frac{\alpha^2\varepsilon^2n}{2}$ such that 
$$
\frac{1-\alpha \varepsilon}{1+\alpha \varepsilon }\leqslant s_{\min}\left(\widetilde{U}_{\sigma}\right)
\leqslant s_{\max}\left(\widetilde{U}_{\sigma}\right)\leqslant 
\frac{1+\alpha \varepsilon }{1-\alpha \varepsilon}
$$
This means that 
$$
\left( \frac{1-\alpha \varepsilon}{1+\alpha \varepsilon }\right)^2\cdot Id_\sigma\preceq
\left(\widetilde{U}_\sigma\right)^*\cdot \left(\widetilde{U}_\sigma\right) 
\preceq \left( \frac{1+\alpha \varepsilon }{1-\alpha \varepsilon}\right)^2\cdot Id_\sigma
$$
Recall that $\widetilde{U}_\sigma= \widetilde{U}P_\sigma^*$ and 
$\widetilde{U}^*\cdot \widetilde{U}=\frac{A}{\Vert T\Vert}$. Therefore 
by the choice of $\alpha$
$$
(1-\varepsilon)\Vert T\Vert \cdot Id_\sigma\preceq
P_\sigma AP_\sigma^*\preceq
(1+\varepsilon)\Vert T\Vert\cdot Id_\sigma
$$
which after rearrangement gives
$$
-\varepsilon \Vert T\Vert\preceq P_\sigma TP_\sigma^*
\preceq \varepsilon \Vert T\Vert
$$
and finishes the proof.
\end{proof}

Iterating the previous result, we obtain by a deterministic method the strongest result on the paving 
problem which is due to Bourgain-Tzafriri (\cite{MR890420}, see also \cite{MR2379999}) that is every zero-diagonal matrix 
of size $n\times n$ can be paved with at most $O(\log (n))$ blocks. Once again, we are able to achieve this for 
Hermitian matrices.
\begin{prop}
Let $T$ be an $n\times n$ Hermitian matrix with $0$ diagonal. 
For any $\varepsilon \in (0,1) $, there exists a partition of $\{1,...,n\}$
into $k$ subsets $\sigma_1,..,\sigma_k$ such that 

$$
k\leqslant \frac{2 \log(n)}{(\sqrt{2}-1)^4\varepsilon^2}
$$ 
and for any $i\leqslant k$,
$$
\left\Vert P_{\sigma_i} TP_{\sigma_i}^*\right\Vert\leqslant \varepsilon \Vert T\Vert
$$ 

\end{prop}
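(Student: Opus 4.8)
The plan is to iterate the previous proposition (the one extracting a single subset $\sigma$ with $\vert\sigma\vert\geqslant\frac{(\sqrt 2-1)^4\varepsilon^2 n}{2}$ and $\Vert P_\sigma T P_\sigma^*\Vert\leqslant\varepsilon\Vert T\Vert$) in exactly the same way that Corollary~\ref{partition-small-block} is deduced from Proposition~\ref{prop-standardized-big-block}. First I would apply the previous proposition to $T$ itself to extract $\sigma_1$ with $\vert\sigma_1\vert\geqslant\frac{(\sqrt 2-1)^4\varepsilon^2}{2}\,n$ and $\Vert P_{\sigma_1}TP_{\sigma_1}^*\Vert\leqslant\varepsilon\Vert T\Vert$. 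Then I would pass to the complementary index set: the matrix $P_{\sigma_1^c}TP_{\sigma_1^c}^*$ is again Hermitian with zero diagonal (a principal submatrix of a zero-diagonal Hermitian matrix), it has size $\vert\sigma_1^c\vert\leqslant n$, and its operator norm is at most $\Vert T\Vert$ since it is a compression of $T$. So the previous proposition applies to it with the \emph{same} $\varepsilon$, producing $\sigma_2\subset\sigma_1^c$ with $\vert\sigma_2\vert\geqslant\frac{(\sqrt 2-1)^4\varepsilon^2}{2}\,\vert\sigma_1^c\vert$ and $\Vert P_{\sigma_2}TP_{\sigma_2}^*\Vert\leqslant\varepsilon\Vert T\Vert$ (note that $\Vert P_{\sigma_2}TP_{\sigma_2}^*\Vert = \Vert P_{\sigma_2}(P_{\sigma_1^c}TP_{\sigma_1^c}^*)P_{\sigma_2}^*\Vert \leqslant \varepsilon \Vert P_{\sigma_1^c}TP_{\sigma_1^c}^*\Vert \leqslant \varepsilon\Vert T\Vert$).

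Continuing inductively, at stage $j$ one has removed $\sigma_1,\dots,\sigma_{j-1}$ and is left with $R_{j-1}:=\{1,\dots,n\}\setminus(\sigma_1\cup\cdots\cup\sigma_{j-1})$; applying the previous proposition to the principal submatrix indexed by $R_{j-1}$ extracts $\sigma_j\subset R_{j-1}$ with $\vert\sigma_j\vert\geqslant\beta\vert R_{j-1}\vert$ where $\beta:=\frac{(\sqrt 2-1)^4\varepsilon^2}{2}$, hence $\vert R_j\vert\leqslant(1-\beta)\vert R_{j-1}\vert$. Iterating, $\vert R_j\vert\leqslant(1-\beta)^j n$, so the process terminates (i.e. $R_j$ becomes empty, after which the remaining blocks can be taken empty or the leftover singletons absorbed, using that a single diagonal entry is $0$) once $(1-\beta)^j n<1$, i.e. after $j=\lceil\log n/\log\frac{1}{1-\beta}\rceil$ steps. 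Since $\log\frac1{1-\beta}\geqslant\beta$, this gives a number of blocks
$$
k\leqslant \frac{\log n}{\beta}=\frac{2\log(n)}{(\sqrt 2-1)^4\varepsilon^2},
$$
as claimed.

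There is essentially no obstacle here beyond bookkeeping: the content is entirely in the single-subset proposition, and the iteration is the "standard inductive argument" already invoked in the proof of Proposition~\ref{partition-big-block}. The two small points to check carefully are (i) that every principal submatrix of a zero-diagonal Hermitian matrix is again zero-diagonal Hermitian, so the hypothesis of the previous proposition is preserved at each stage, and (ii) that compressing only shrinks the operator norm, so the bound $\Vert P_{\sigma_i}TP_{\sigma_i}^*\Vert\leqslant\varepsilon\Vert T\Vert$ holds with $\Vert T\Vert$ (not merely the norm of the current residual block) on the right. Both are immediate. The mild quantitative step is the estimate $\log\frac{1}{1-\beta}\geqslant\beta$ for $\beta\in(0,1)$, which turns the multiplicative decay rate into the stated logarithmic bound on $k$; one should also note $\beta<1$ so that the decay is genuine, which holds since $(\sqrt2-1)^4<1$ and $\varepsilon<1$.
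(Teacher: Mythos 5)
Your proof is correct, and it gives the stated bound with the same constant, but it runs the iteration at a different level than the paper does. The paper does not re-apply the square-submatrix proposition to residual compressions of $T$; instead it forms $A=T+\Vert T\Vert\cdot Id$, $U=A^{1/2}$, $\widetilde U=U/\Vert T\Vert^{1/2}$ once, applies the column-partition result (Proposition~\ref{partition-big-block}/Corollary~\ref{partition-small-block}, itself proved by the iterate-and-pass-to-the-complement argument, with the parameter chosen as in the single-subset proposition, which is how the factor $2=\Vert\widetilde U\Vert^2$ and hence $k\leqslant 2\log n/((\sqrt2-1)^4\varepsilon^2)$ arises) to the columns of this one standardized matrix, and then converts each column block $\widetilde U_{\sigma_i}$ into the bound $\Vert P_{\sigma_i}TP_{\sigma_i}^*\Vert\leqslant\varepsilon\Vert T\Vert$ exactly as in the previous proposition. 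Your route instead iterates the previous proposition directly on the principal submatrices $P_{R_{j-1}}TP_{R_{j-1}}^*$, which requires (and you correctly check) that such compressions remain Hermitian with zero diagonal and have norm at most $\Vert T\Vert$, so the guarantee $\Vert P_{\sigma_j}TP_{\sigma_j}^*\Vert\leqslant\varepsilon\Vert T\Vert$ survives; the price is that at each stage you implicitly re-take a square root of $P_RTP_R^*+\Vert P_RTP_R^*\Vert\cdot Id$ with the residual's own normalization, whereas the paper keeps a single factorization and normalization by $\Vert T\Vert$ throughout. Both arguments yield $\vert R_j\vert\leqslant(1-\beta)^j n$ with $\beta=(\sqrt2-1)^4\varepsilon^2/2$ and hence $k\leqslant\log n/\beta$; your rounding of the number of steps (the ceiling versus $\log n/\beta$, handled via $\log\frac{1}{1-\beta}\geqslant\beta$ and the trivial treatment of a leftover singleton, whose $1\times1$ block is zero) is at the same level of informality as the paper's own ``standard inductive argument'' in Proposition~\ref{partition-big-block}, so there is no genuine gap.
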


\begin{proof}
As before denote $A=T+\Vert T\Vert\cdot Id$ and $U=A^{\frac{1}{2}}$. 
Note $\widetilde{U}=\frac{U}{\Vert T\Vert^{\frac{1}{2}}}$ the standardized matrix.
Applying  Corollary~\ref{partition-small-block}, we have a column partition for which we do 
on each block as we did in the previous proposition. The result follows easily.
\end{proof}

%%%%%%%%%%%%%%%%%%%%%%%%%%%%%%%%%%%%%%%%%%%%%%%%%%%%%%%%%%%%%%%%%%%%%%%%%%%

\vskip 0.5cm

\textbf{Acknowledgement :} I am grateful to my PhD advisor Olivier Guédon for his
constant encouragement and his careful review of this manuscript.

\nocite{*}
\bibliographystyle{abbrv}
\bibliography{bibliography-column-selection_new}

\end{document}